\newcommand{\edit}[1]{{#1}}
\declaretheoremstyle[
headfont=\normalfont\bfseries,
headindent= 0pt,
bodyfont=\em,
spaceabove=8pt,
spacebelow=8pt
]{thm}
\declaretheoremstyle[
headfont=\normalfont\em,
headindent= 0pt,
spaceabove=8pt,
spacebelow=8pt
]{remark}
\declaretheoremstyle[
headfont=\normalfont\bfseries,
headindent= 0pt,
spaceabove=8pt,
spacebelow=8pt
]{example}
\declaretheoremstyle[
headfont=\normalfont\bfseries,
headindent= 0pt,
spaceabove=8pt,
spacebelow=8pt
]{definition}
\declaretheorem[name=Theorem,style=thm,numberwithin=section]{theorem}
\newtheorem*{thm*}{Theorem}
\declaretheorem[name=Proposition,style=thm,sibling=theorem]{proposition}
\declaretheorem[name=Lemma,style=thm,sibling=theorem]{lemma}
\declaretheorem[name=Corollary,style=thm,sibling=theorem]{corollary}
\declaretheorem[name=Example,style=definition,sibling=theorem]{example}
\declaretheorem[name=Remark,style=remark]{remark}
\declaretheorem[name=Definition,style=definition,sibling=theorem]{definition}
\crefname{theorem}{Theorem}{Theorems}
\crefname{proposition}{Proposition}{Propositions}
\crefname{lemma}{Lemma}{Lemmas}
\crefname{corollary}{Corollary}{Corollaries}
\crefname{example}{Example}{Examples}
\crefname{definition}{Definition}{Definitions}
\crefname{remark}{Remark}{Remarks}
\crefname{section}{Section}{Sections}
\crefname{enumi}{}{}
\crefname{equation}{}{}
\numberwithin{equation}{section}
\newcommand{\bC}{\mathbb{C}}
\newcommand{\dvol}{d\mathrm{vol}}
\newcommand{\boxop}{\widetilde{\Box}}
\newcommand{\note}[1]{#1}
\DeclareMathOperator{\dom}{dom}
\DeclareMathOperator{\image}{im}
\begin{document}
\title[The $\partial$-complex on weighted Bergman spaces]{The $\partial$-complex on weighted Bergman spaces on Hermitian manifolds}

\author{Friedrich Haslinger}
\address{Fakultät für Mathematik, Universität Wien, Oskar-Morgenstern-Platz 1, 1090 Wien, Austria}
\email{friedrich.haslinger@univie.ac.at}
\author{Duong Ngoc Son}
\address{Fakultät für Mathematik, Universität Wien, Oskar-Morgenstern-Platz 1, 1090 Wien, Austria}
\email{son.duong@univie.ac.at}
\date{January 10, 2020}
\thanks{The first-named author was partially supported by the Austrian Science Fund, FWF-Projekt P 28 154-N35. The second-named author was supported by the Austrian Science Fund, FWF-Projekt M 2472-N35.}
\begin{abstract}
	In this paper, we investigate the $\partial$-complex on weighted Bergman spaces on Hermitian manifolds satisfying a certain holomorphicity/duality condition. This generalizes the situation of the Segal-Bargmann space in $\mathbb{C}^n$, studied earlier by the first-named author, in which the adjoint of the differentiation is the multiplication by $z$. The results are applied to two important examples in the unit ball, namely, the complex hyperbolic metric and a conformally K\"ahler metric which are related to Bergman spaces with  so-called ``exponential'' and ``standard'' weights, respectively. In particular, we obtain new estimates for the solutions of the $\partial$-equation on these weighted Bergman spaces.
	
	\medskip
		
	\noindent
	\textit{Keywords:} $\partial$-complex, Segal-Bargmann space, Bergman space, Hermitian metric
	
	\noindent
	2010 \textit{Mathematics Subject Classification}: 32Q15, 32W05, 32W99, 53C55
\end{abstract}
\maketitle
\section{Introduction}
In a recent paper \cite{haslinger}, the author studies the 
$\partial$-complex on the Segal-Bargmann spaces of $(p,0)$-forms
\begin{equation*}
A^2_{(p,0)}(\mathbb{C}^n, e^{-|z|^2}) : = \left\{u = \sum_{|J| = p}{}' u_J dz^J \colon \int_{\mathbb{C}^n} |u|^2 e^{-|z|^2} \, d \lambda < \infty, u_J \ \text{are holomorphic} \right\}.
\end{equation*}
Consider the operator
$$\partial f = \sum_{j=1}^n \frac{\partial f}{\partial z_j}\, dz_j,$$
which is densely defined on $A^2(\mathbb C^n, e^{-|z|^2})$ and maps to $A^2_{(1,0)}(\mathbb C^n, e^{-|z|^2})$,
the space of $(1,0)$-forms with coefficients in $A^2(\mathbb C^n, e^{-|z|^2}).$
We will choose the domain ${\text{dom}}(\partial)$ in such a way that $\partial$ becomes a closed operator on 
$A^2(\mathbb C^n, e^{-|z|^2}).$
 In general, we get the $\partial$-complex
$$  A^2_{(p-1,0)}(\mathbb{C}^n, e^{-|z|^2}) 
\underset{\underset{\partial^* }
\longleftarrow}{\overset{\partial }
{\longrightarrow}} A^2_{(p,0)}(\mathbb{C}^n, e^{-|z|^2}) \underset{\underset{\partial^* }
\longleftarrow}{\overset{\partial }
{\longrightarrow}} A^2_{(p+1,0)}(\mathbb{C}^n, e^{-|z|^2}),
$$ 
where $1\leqslant p \leqslant n-1$ and $\partial^*$ denotes the adjoint operator of $\partial.$

It is well-known that the forms with polynomial coefficients are dense in the Segal-Bargmann 
space and hence $\partial$ is a densely defined operator on $A^2_{(p,0)}(\mathbb{C}^n, e^{-|z|^2})$. Furthermore, it is proved in \cite{haslinger} that the
associated complex Laplacian 
$$\tilde \Box_p = \partial^* \partial + \partial \partial^*,$$
with ${\text{dom}} (\tilde\Box_p) =\{ f\in {\text{dom}}(\partial ) \cap {\text{dom}}(\partial^* ) : \partial f \in {\text{dom}}(\partial ^*) \ {\text{and}} \ \partial f^* \in {\text{dom}}(\partial )\}$ 
 acts as \edit{an} unbounded self-adjoint operator on $A^2_{(p,0)}(\mathbb{C}^n, e^{-|z|^2}).$ 
It has a bounded, even compact inverse $\widetilde{N}_p$. This exposes an important difference between the $\partial$-complex and the well-known $\overline{\partial}$-complex on the weighted $L^2$-space with the same weight function, where the corresponding complex Laplacian fails to \edit{have a compact} resolvent (see \cite{haslinger/book}).

The inspiration for \cite{haslinger} comes from quantum mechanics, where the annihilation operator $a_j$ can be represented by differentiation with respect to $z_j$ on $A^2(\mathbb C^n, e^{-|z|^2})$ and its adjoint, the creation operator $a^*_j,$ can be represented by multiplication by $z_j,$ both operators being unbounded and densely defined (see \cite{folland/book}). One can show that $A^2(\mathbb C^n, e^{-|z|^2})$ with this action of the $a_j$ and $a^*_j$ is an irreducible representation $M$ of the Heisenberg group, by the Stone-von Neumann theorem it is the only one up to unitary equivalence. Physically $M$ can be thought of as the Hilbert space of a harmonic oscillator with $n$ degrees of freedom and Hamiltonian operator
\begin{equation*}
H= \sum_{j=1}^n \frac{1}{2} (P_j^2+Q_j^2) = \sum_{j=1}^n \frac{1}{2} (a_j^* a_j+a_ja_j^*).
\end{equation*}

The duality between differentiation and multiplication is a special feature of the Segal-Bargmann space: For general weighted spaces of holomorphic functions, we cannot expect such a duality. This poses the first question that is studied in this paper: \textit{Under what conditions on a Hermitian metric $h_{j\bar{k}} dz^j \otimes d\bar{z}^k$ and a weight function $e^{-\psi}$ on a given complex manifold $M$, does the corresponding $\partial$-complex on the weighted Bergman spaces of $(p,0)$-forms possess a similar duality?}  To this end we exhibit conditions on the weight and the Hermitian metric under which the $\partial$-complex has this desired duality.
In the K\"ahlerian case, the result can be stated as follows. 
\begin{theorem}[$=$ \cref{thm:2.10}]\label{thm:1.1a}
	Let $(M,h)$ be a complete Kähler manifold with weight $e^{-\psi}.$  Assume that $\partial_p$ is densely defined in the Bergman space $A^2_{(p,0)}(M,h,e^{-\psi})$. If $(\bar{\partial} \psi)^{\sharp}$ is holomorphic, then 
	\begin{equation}\label{e:2.47}
	D^{\ast} \eta = \partial^{\ast}\eta
	\end{equation} 
	for all $\eta \in \dom(D^{\ast}) \cap A^2_{(p + 1,0)}(M,h,e^{-\psi})$. In particular, $D^{\ast} \eta$ is holomorphic for holomorphic $\eta \in \dom(D^{\ast})$. Here, $D^{\ast}$ and $\partial^{\ast}$ are the Hilbert space adjoints of $\partial$ in the Lebesgue space $L^2_{(p + 1,0)}(M,h,e^{-\psi})$ and $A^2_{(p + 1,0)}(M,h,e^{-\psi})$, respectively.
\end{theorem}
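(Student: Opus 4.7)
The plan is to decompose the weighted $L^2$-adjoint $D^{\ast}$ into a piece that annihilates holomorphic forms (using the Kähler structure) plus a multiplication-type piece that preserves them (using holomorphicity of $V$), and then conclude that $D^{\ast}\eta$ itself is holomorphic, hence equal to $\partial^{\ast}\eta$.

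First, I would derive the pointwise decomposition
\begin{equation*}
D^{\ast}\eta = \partial^{\ast}_{0} \eta + \iota_{V}\eta,
\end{equation*}
where $\partial^{\ast}_{0}$ is the formal adjoint of $\partial$ in the unweighted space $L^2(M,h,dV_h)$ and $V = (\bar{\partial}\psi)^{\sharp}$. This identity follows from the rescaling relation $D^{\ast}\eta = e^{\psi}\partial^{\ast}_{0}(e^{-\psi}\eta)$ together with the Leibniz rule, the interior product $\iota_{V}\eta$ appearing as the commutator $e^{\psi}[\partial^{\ast}_{0}, e^{-\psi}]\eta$. By elliptic regularity for $\bar{\partial}$, any $\eta \in A^2_{(p+1,0)}$ is smooth, so the formal identity is valid pointwise on $\eta$.

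Second, the Kähler hypothesis gives $\partial^{\ast}_{0}\eta = 0$ whenever $\eta$ is holomorphic. In local holomorphic coordinates, $\partial^{\ast}_{0}\eta$ is a contraction of the Chern covariant derivative $\nabla_{\bar{k}}\eta_{jI}$ with $h^{\bar{k}j}$, and since the $(0,1)$-part of the Chern connection on the holomorphic cotangent bundle is trivial, $\nabla_{\bar{k}}\eta_{jI} = \bar{\partial}_{\bar{k}}\eta_{jI} = 0$. (Equivalently, one may invoke the Kähler identity $\partial^{\ast}_{0} = i[\Lambda,\bar{\partial}]$ together with $\bar{\partial}\eta = 0$ and $\Lambda\eta = 0$ for bidegree reasons.) Hence $D^{\ast}\eta = \iota_{V}\eta$ pointwise.

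Third, by the Cartan-type identity $\bar{\partial}(\iota_{V}\eta) = \iota_{\bar{\partial}V}\eta - \iota_{V}(\bar{\partial}\eta)$ together with the hypotheses that $V$ and $\eta$ are holomorphic, we obtain $\bar{\partial}(\iota_{V}\eta) = 0$. Thus $D^{\ast}\eta$ is a holomorphic $(p,0)$-form, and because $\eta \in \dom(D^{\ast})$ forces $D^{\ast}\eta \in L^2(M,h,e^{-\psi})$, it lies in $A^2_{(p,0)}(M,h,e^{-\psi})$. Since the Bergman-space adjoint $\partial^{\ast}$ is the composition of the Bergman projection with $D^{\ast}$, the holomorphicity of $D^{\ast}\eta$ forces $\partial^{\ast}\eta = D^{\ast}\eta$, and the ``In particular'' claim follows immediately.

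The main obstacle I anticipate is the first step. Deriving the decomposition on smooth compactly supported forms is a routine integration by parts, but promoting it to a pointwise identity valid for every $\eta \in \dom(D^{\ast})\cap A^2_{(p+1,0)}$ requires a density or cutoff argument; this is where completeness of $(M,h)$ enters, via Gaffney-type cutoffs $\chi_{k}$ with controlled $\|d\chi_{k}\|_{\infty}\to 0$, ensuring that pairings against non-compactly supported test functions $f\in\dom(\partial)$ can be obtained as limits of the compactly supported case.
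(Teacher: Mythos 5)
Your argument is correct and takes essentially the same route as the paper: both reduce to showing that for holomorphic $\eta$ on a Kähler manifold $D^{\ast}\eta = \iota_{(\bar{\partial}\psi)^{\sharp}}\eta$ --- the paper via the coordinate formula \cref{e:tstar}, where holomorphicity of $\eta$ kills the covariant-derivative part and the Kähler hypothesis kills $T^{\sharp}$, you via the weight-rescaling decomposition and the Kähler identity for the unweighted adjoint --- and then conclude from $\partial^{\ast}\eta = P_{h,\psi}(D^{\ast}\eta)$ (\cref{prop:25}) that the projection acts as the identity on the holomorphic, square-integrable form $D^{\ast}\eta$. The density/cutoff issue you flag at the end is precisely what the paper addresses with completeness and the Andreotti--Vesentini density lemma.
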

\begin{remark}
\begin{enumerate}
\renewcommand{\labelenumi}{(\roman{enumi})}
	\item In some situations, the completeness of the metric is not needed for the study of the $\partial$-operator on the weighted Bergman spaces; see Section 5.2.
	\item The condition $(\bar{\partial} \psi)^{\sharp}$ being holomorphic means that in any local coordinate patch $(U,z)$, $h^{l\bar{k}} \partial_{\bar{k}} \psi$ (summation convention) is holomorphic for each $l$. This condition also appears in several similar (but not directly related) problems (cf. \cite{gross1999hypercontractivity,munteanu--wang}). For the general non-K\"ahlerian case, the required conditions involve the torsion tensor which is rather complicated, especially for higher degree forms. 
\end{enumerate}
\end{remark}

In view of \cref{thm:1.1a}, we focus on the Hermitian manifolds satisfying these holomorphicity/duality conditions, in both K\"ahlerian and non-K\"ahlerian cases, and prove the coercivity of $\boxop$ and its inverse, the Neumann operators $\widetilde{N}$. These properties are important for our applications.In fact, these results are applied to the $\partial$-equation in the unit ball with a given right-hand side belonging to a suitable Bergman space. We will consider the complex hyperbolic metric on the unit ball $\mathbb{B} := \{z \in \mathbb{C}^n \colon |z| ^2 < 1\}$ and a conformally flat (non-K\"ahler when $n\geqslant 2$) metric with appropriate weights. These two models have close relations to the weighted Bergman spaces with the so-called ``exponential'' and ``standard'' weights. For instance, in the latter case the weighted Bergman space is
\[
	A^{2}_{\gamma} : = \left\{f \in \mathcal{O}(\mathbb{B})
	\colon \|f\|^2_{\gamma}:=\int_{\mathbb{B}} |f|^2 (1-|z|^2)^{\gamma} d\lambda < \infty \right\}, \quad \gamma > -1.
\]
Here, $d\lambda$ is a (constant multiple) of the standard Lebesgue measure on $\mathbb{C}^n$. The coercivity of $\widetilde{\Box}$ for $(1,0)$-forms implies that the canonical solution to the $\partial$-equation exists in weighted Bergman spaces with a sharp estimate. In the case of the second model, we obtain the following result.
\begin{theorem}\label{thm:1.1}
	Suppose that $\gamma > 0$.	For every $\eta_1, \eta_2, \dots, \eta_n \in A^2_{\gamma}(\mathbb{B})$ such that $\partial \eta_j/\partial z^k = \partial \eta_k/\partial z^j$ for every pair $j,k = 1,2, \dots, n$, there exists $f \in A^2_{\gamma}(\mathbb{B})$ such that $\partial f/\partial z^k = \eta_k$ for every $k= 1,2, \dots, n$, and 
	\begin{equation} \label{e:1.1}
	\int_{\mathbb B} |f|^2 (1- |z|^2)^{\gamma-1} d\lambda \leqslant \frac{1}{\gamma} \, \int_{\mathbb{B}} \sum_{k=1}^n |\eta_k|^2 (1-|z|^2)^{\gamma}\, d\lambda.
	\end{equation}
	The constant $\gamma^{-1}$ on the right-hand side of \cref{e:1.1} is sharp. The equality occurs if and only if $\eta_k$'s are constants.
\end{theorem}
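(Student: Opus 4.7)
The plan is to view \cref{thm:1.1} as a special case of the general framework developed earlier in the paper. On $\mathbb{B}$ I take the conformally flat Hermitian metric $h_{j\bar k}=\delta_{jk}/(1-|z|^{2})$ (K\"ahler only when $n=1$) together with the weight $e^{-\psi}=(1-|z|^{2})^{n+\gamma-1}$. Since $h^{j\bar k}=(1-|z|^{2})\delta_{jk}$ and the volume form is a constant multiple of $(1-|z|^{2})^{-n}d\lambda$, unfolding the induced $L^{2}$-norms on functions and on $(1,0)$-forms reproduces exactly the weights $(1-|z|^{2})^{\gamma-1}$ and $(1-|z|^{2})^{\gamma}$ appearing in \cref{e:1.1}. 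The holomorphicity/duality hypothesis is then immediate: $h^{l\bar k}\partial_{\bar k}\psi=(n+\gamma-1)z^{l}$ is holomorphic, while the additional torsion conditions needed in the non-K\"ahler case follow from the conformal flatness of $h$ together with the radial dependence of $\psi$ (the precise verification is the task of Section 5.2).

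Under these identifications a direct integration by parts in the scalar Bergman space yields the clean formula
\[
\partial^{\ast}\eta=\gamma\sum_{k=1}^{n}z^{k}\eta_{k}
\]
for holomorphic $(1,0)$-forms $\eta=\sum_{k}\eta_{k}\,dz^{k}$, which is the non-K\"ahler analogue of the ``multiplication by $z$ is adjoint to differentiation'' phenomenon of the Segal--Bargmann space. On closed forms one has $\widetilde{\Box}\eta=\partial\partial^{\ast}\eta$, and the eigenvalue equation $\widetilde{\Box}\eta=\lambda\eta$ reduces via Euler's homogeneity identity (using the compatibility $\partial_{j}\eta_{k}=\partial_{k}\eta_{j}$) to $\lambda=\gamma$ exactly when the $\eta_{k}$'s are constants. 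The coercivity theorem of the paper then pins down $\widetilde{\Box}\geq \gamma$ on $(1,0)$-forms, with equality precisely on constant-coefficient forms, so the Neumann operator satisfies $\|\widetilde{N}\|\leq \gamma^{-1}$.

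Now for closed $\eta$ I define the canonical solution $f:=\partial^{\ast}\widetilde{N}\eta$. A standard argument using $\partial^{2}=0$ forces $\partial^{\ast}\partial\widetilde{N}\eta=0$, so $\partial f=\widetilde{\Box}\widetilde{N}\eta=\eta$, and
\[
\|f\|^{2}=\langle\partial\partial^{\ast}\widetilde{N}\eta,\widetilde{N}\eta\rangle=\langle\eta,\widetilde{N}\eta\rangle\leq \gamma^{-1}\|\eta\|^{2},
\]
which is \cref{e:1.1} after unfolding the norms from the first paragraph. For sharpness I would take $\eta_{k}=c_{k}$ constants; then $f=\sum c_{k}z^{k}$ is the canonical solution (being orthogonal to constants in $A^{2}_{\gamma}(\mathbb{B})$ by rotational invariance), and the beta-integral identity
\[
\int_{\mathbb{B}}|z^{1}|^{2}(1-|z|^{2})^{\gamma-1}\,d\lambda=\frac{1}{\gamma}\int_{\mathbb{B}}(1-|z|^{2})^{\gamma}\,d\lambda
\]
turns \cref{e:1.1} into equality. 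Conversely, equality in \cref{e:1.1} forces $\widetilde{N}\eta=\gamma^{-1}\eta$, which by the eigenspace characterization above pins $\eta$ to have constant coefficients.

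The main obstacle is the verification, in Section~5.2, that this conformally flat non-K\"ahler model genuinely falls inside the abstract coercivity theorem of the paper: the torsion tensor of $h$ must be computed explicitly and its contribution shown not to spoil the sharp lower bound $\widetilde{\Box}\geq \gamma$. Once that framework is in place, the rest---the identification of norms, the formula for $\partial^{\ast}$, the construction of the canonical solution, and the sharpness analysis---is bookkeeping with holomorphic functions and elementary beta integrals.
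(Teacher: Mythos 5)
Your high-level architecture matches the paper's: same metric $h_{j\bar k}=\delta_{jk}(1-|z|^2)^{-1}$ and weight, same unfolding of the norms, the formula $\partial^*\eta=\gamma\sum_k z^k\eta_k$, the canonical solution $f=\partial^*\widetilde N\eta$, and the identification of the sharp constant with the inverse of the lowest eigenvalue. But there is a genuine gap at the one step that carries all the weight: the coercivity $\widetilde{\Box}_1\geq\gamma$ and the identification of its eigenspace. You propose to obtain this from ``the abstract coercivity theorem of the paper'' (\cref{cor:coercive}), and you correctly flag the torsion verification as the main obstacle --- but the paper explicitly proves (in the closing remark of Section 5) that the curvature condition \cref{e:curvature} \emph{fails} for this metric when $n\geq 2$: the form $i\partial\bar\partial\psi+\Theta-\mu\, i\,T\circ\overline{T}-\epsilon\,\omega_h$ cannot be made nonnegative near the boundary. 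So the route you lean on is a dead end. What the paper does instead is a direct spectral analysis: it computes $\partial^*$ on $(2,0)$-forms (where the torsion term $T^\sharp$ genuinely contributes, giving $\partial^*v=(2-n-\alpha)z^rv_{rs}dz^s$), obtains the explicit first-order formula \cref{boxx} for $\widetilde{\Box}_1$, observes that $\widetilde{\Box}_1$ preserves the finite-dimensional subspaces $A^2_{(1,0)}(m)$ of forms with homogeneous degree-$m$ coefficients, bounds the eigenvalues there from below by $(m+1)\gamma$ via Ger\v{s}gorin's theorem, and invokes \cref{lem51} to conclude that the spectrum is discrete with minimum $\gamma$ attained exactly on $\mathrm{span}\{dz^k\}$. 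Without this (or an equivalent substitute), neither the existence of $\widetilde N_1$ nor the bound $\|\widetilde N_1\|\leq\gamma^{-1}$ nor the equality characterization is established.

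A constructive remark: your parenthetical appeal to Euler's identity is actually very close to a correct, and arguably more elementary, repair for the part of the argument you need. For closed $\eta$ with homogeneous coefficients of degree $m$, the relations $\partial_j\eta_k=\partial_k\eta_j$ and $z^j\partial_j\eta_k=m\eta_k$ give $\partial\partial^*\eta=\gamma(m+1)\eta$ directly, so $f=\frac{1}{m+1}z^j\eta_j$ solves $\partial f=\eta$ with $\|f\|^2=\frac{1}{\gamma(m+1)}\|\eta\|^2$; summing over $m$ using orthogonality of homogeneous degrees yields \cref{e:1.1} with the sharp constant and the equality case, bypassing $\widetilde N_1$ on the full space entirely. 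If you want the full Neumann operator as stated in \cref{thm:5.3}, however, you must still control $\widetilde{\Box}_1$ off $\ker\partial$, and for that the paper's matrix/Ger\v{s}gorin analysis (or something equivalent) is needed.
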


\begin{remark}
 Our approach is based on an explicit analysis of the complex Laplacian $\widetilde{\Box}$ (which in our situations reduces to a first order differential operator) using suitable orthonormal bases of the weighted Bergman spaces of $(1,0)$-forms. This allows us to study the spectrum of $\widetilde{\Box}$ via the spectra of a family of (finite) matrices; see the discussion in Section~5 for more details.
\end{remark}
\cref{thm:1.1} follows from a more detailed statement in \cref{thm:5.3}. Using the same strategy, we prove in \cref{prop:51} an analogous result for the Bergman space with exponential weight. We believe that this strategy can be applied to get useful results in other situations as well.

The structure of the paper is as follows. In Section 2, we establish basic properties of the $\partial$-complex on the weighted Bergman spaces on Hermitian manifolds. In particular, in Theorem~2.10  we give a condition on the Hermitian metric $h_{j\bar{k}} dz^j \otimes d\bar{z}^k$ and the weight $\psi$ such that the adjoints of the $\partial$-operator in the $L^2$-space and in the Bergman space agree in the relevant domains. In Section 3, we briefly discuss the complex Laplacian and a basic identity generalizing the case of Segal-Bargmann space in \cite{haslinger} and some well-known basic identity in the $L^2$-theory for the $\overline{\partial}$-complex. In Section 4, we develope the theory of the $\partial$-Neumann operator adapted to the case of $\partial$-complex on weighted Bergman spaces. Finally, in Section 5, we investigate the $\partial$-complex on weighted Bergman spaces on the unit ball and solve the $\partial$-equation, proving \cref{thm:1.1} and an analogous result for the Bergman space with exponential weight.
\section{The $\partial$-operators on weighted Bergman spaces}
In this section, we study some general properties of the $\partial$-complex on the Bergman spaces on Hermitian manifolds. For the reader's convenience, we recall here some basic facts and fix some notations. For general references regarding Hermitian manifolds and the $\partial$-complex, we refer to \cite{kodaira--morrow,andreotti1965carleman} and \cite{haslinger/book}, respectively.

Let $(M,h)$ be a Hermitian manifold. In holomorphic coordinates $z^1, \dots , z^n$, the metric $h$ has the form
\begin{equation}
h_{j\bar{k}} dz^j \otimes dz^{\bar{k}},
\end{equation}
where $\left[h_{j\bar{k}}\right]$ is a positive definite Hermitian matrix with smooth coefficients. This metric
induces a volume element which we denoted by $\dvol_h$. If $\psi$ is a weight function on $M$, then the Hilbert space of $L^2$ integrable functions with respect to the 
measure $d\mu : = e^{-\psi} \dvol_h$ is defined by
\begin{equation}
L^2(M, e^{-\psi} \dvol_h)
=
\left\{
f\colon M \to \bC\ \text{measurable}\ \colon \int_M |f|^2 e^{-\psi} \dvol_h < +\infty
\right\}.
\end{equation}
The weighted Bergman space {with weight $\psi$} is defined to be
\begin{equation}
A^2(M, e^{-\psi} \dvol_h) = L^2\left(M, e^{-\psi} \dvol_h \right) \cap \mathcal{O}\, (M). 
\end{equation}
Here, $\mathcal{O}\, (M)$ denotes the space of holomorphic functions on $M$.
Under a suitable condition on $\psi$, the Bergman space $A^2(M,e^{-\psi} \dvol_h)$ is a closed subspace of $L^2(M,e^{-\psi} \dvol_h)$ and thus it is a Hilbert space (although it can be trivial, finite, or infinite dimensional.)

The Hermitian metric $h$ induces a metric on tensors of every degree. For example, 
if in {local coordinates} $u= u_j dz^j$ and $v = v_j dz^j$ are $(1,0)$-forms, then 
\begin{equation}
\langle u , v \rangle_h = h^{j \bar{k}} u_j v_{\bar{k}}, \quad |u|^2_h =\langle u , u \rangle_h \quad
\end{equation}
where $\left[ h^{j \bar{k}}\right] $ is the transpose of the inverse matrix of $\left[h_{j \bar{k}}\right]$. We define the weighted spaces of $(p,0)$-forms
\begin{equation}
L^2_{(p,0)}(M, h, e^{-\psi})
=
\left\{
u \text{ is a } (p,0)\text{-form} \colon \int_M |u|_h^2\, e^{-\psi} \dvol_h < \infty
\right\}, \quad 0 \leqslant p \leqslant n,
\end{equation}
with inner product
\begin{equation}
(u,v)_{h,\psi} = \int_M \langle u ,v \rangle_h \, e^{-\psi} \dvol_h .
\end{equation}

We say that a $(p,0)$-form $u$ is holomorphic if in local holomorphic coordinates, we can write
\begin{equation}
u = \sum_{|J|=p}{}^{'} u_J dz^{J}
\end{equation} 
with holomorphic coefficients $u_J$ and with summation over increasing multiindices. Observe that this notion does not depend on the
chosen coordinates (cf. \cite{haslinger}) and hence is well-defined on complex manifolds. We define the Bergman space of $(p,0)$-forms to be
\begin{equation}
A^2_{(p,0)}(M, h, e^{-\psi})
=
\left\{
u \text{ is a holomorphic } (p,0)\text{-form } \colon \int_M |u|_h^2\, e^{-\psi} \dvol_h < \infty
\right\}.
\end{equation}
For smooth forms, the $\partial$-operator is defined in local coordinates by
\begin{equation}
\partial u
:=
\sum_{|J| = p}{}^{\prime} \sum_{j=1}^n \frac{\partial u_J}{\partial z_j} dz^j \wedge dz^{J}.
\end{equation}
Thus, if $u$ is holomorphic, then so is $\partial u$. 

For a $(p,0)$-form $u$ in $A^2_{(p,0)}(M, h, e^{-\psi})$, 
it is not necessary that the $(p+1,0)$-form $\partial u$ is in $A^2_{(p+1,0)}(M, h, e^{-\psi})$. Therefore, we introduce the subspace
\begin{equation}
\dom(\partial_p)
=
\left\{
u\in A^2_{(p,0)}(M, h, e^{-\psi}) \colon \partial u \in A^2_{(p+1,0)}(M, h, e^{-\psi})
\right\}.
\end{equation}
Clearly, $\dom(\partial_p)$ also depends on both the metric $h$ and the weight function $\psi$.

The interesting situation is when $\dom(\partial_p)$ is dense in $A^2_{(p,0)}(M, h, e^{-\psi})$, for each $p$.
In this case, $\partial$ is a densely defined (bounded or unbounded) operator:
\begin{equation*}
\partial_p\colon A^2_{(p,0)}(M, h, e^{-\psi}) \to A^2_{(p+1,0)}(M, h, e^{-\psi}),
\end{equation*}
and the powerful theory of unbounded operators applies.

Although for general Hermitian manifolds, it is difficult to determine when $\dom(\partial_p)$ restricted to the weighted Bergman space is dense, this is the case in many interesting situations.
\begin{example}
	Let $M = \mathbb{C}^n$ and suppose that $h$ is the standard Euclidean metric and $\psi \colon \mathbb{C}^n \to \mathbb{R}$ is convex as a function of $2n$ real variables (e.g., $\psi(z) = |z|^2$ satisfies this convexity assumption).
	By a result of B.A. Taylor \cite{taylor}, the polynomials
	are dense in $A^2(\mathbb{C}^n, e^{-\psi}d\lambda)$, provided that $A^2(\mathbb{C}^n, e^{-\psi}d\lambda)$ contains the polynomials. More generally, $(p,0)$-forms with polynomial coefficients are dense in $A^2_{(p,0)}(\mathbb{C}^n, e^{-\psi}d\lambda)$. In this case, since the $\partial$-operator sends $(p, 0)$-forms with polynomial coefficients to $(p+1,0)$-forms with polynomials coefficients, $\partial$ is densely defined on $A^2_{(p,0)}(\mathbb{C}^n, e^{-\psi}d\lambda)$. The case $\psi(z) = |z|^2$ {corresponds to the Segal--Bargmann space and has been treated thoroughly} in \cite{haslinger}. Similarly, if for some $\psi$ all the exponentials are dense in $A^2(\mathbb{C}^n, e^{-\psi}d\lambda)$, then $\dom(\partial)$ is also dense in $A^2(\mathbb{C}^n, e^{-\psi}d\lambda)$.
\end{example}

In the next two propositions, we establish the relation between the $\partial$-operators on the weighted Bergman spaces and on the weighted $L^2$ spaces. We denote by $D_p$ the maximal extension (in the sense of distributions) of the $\partial$-operator acting on $L^2_{(p,0)}(M, h, e^{-\psi}\dvol_h)$.
\begin{proposition} Let $(M,h,\psi)$ be as above. Then for each $p\geqslant 0$, it holds that
	\begin{equation}\label{e211}
	\dom(\partial_p) 
	=
	\dom(D_p) \cap A^2_{(p,0)}(M, h, e^{-\psi}). 
	\end{equation}
\end{proposition}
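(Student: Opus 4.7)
My plan is to prove the two inclusions in \cref{e211} separately, both relying on the elementary observation that a holomorphic form is smooth, so its distributional $\partial$ agrees with the classical one, and moreover the classical $\partial$ of a holomorphic form is itself holomorphic (as already noted in the paragraph introducing the definition of $\partial$).

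For the inclusion $\dom(\partial_p) \subseteq \dom(D_p) \cap A^2_{(p,0)}(M,h,e^{-\psi})$, I would take $u\in\dom(\partial_p)$. By definition $u\in A^2_{(p,0)}(M,h,e^{-\psi}) \subseteq L^2_{(p,0)}(M,h,e^{-\psi})$ and $\partial u\in A^2_{(p+1,0)}(M,h,e^{-\psi})\subseteq L^2_{(p+1,0)}(M,h,e^{-\psi})$. Since $u$ is holomorphic, it is in particular smooth, so the classical $\partial u$ coincides with the distributional $\partial u$. Hence $u\in\dom(D_p)$ and $D_pu=\partial u$.

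For the converse inclusion, let $u\in\dom(D_p)\cap A^2_{(p,0)}(M,h,e^{-\psi})$. Being holomorphic, $u$ is smooth, and again the distributional derivative $D_pu$ coincides with the classical $\partial u$ computed componentwise. Because the coefficients of $u$ are holomorphic, so are the coefficients of $\partial u$, as recalled after the definition of $\partial$. Therefore $\partial u$ is a holomorphic $(p+1,0)$-form that also lies in $L^2_{(p+1,0)}(M,h,e^{-\psi})$ (since $D_pu\in L^2_{(p+1,0)}$), hence $\partial u\in A^2_{(p+1,0)}(M,h,e^{-\psi})$. This is exactly the condition $u\in\dom(\partial_p)$.

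I do not expect a serious obstacle here: the identification of distributional and classical $\partial$ for smooth (in particular holomorphic) forms is standard, and holomorphicity of $\partial u$ for holomorphic $u$ has already been observed. The only minor bookkeeping is to unfold the definitions of $\dom(\partial_p)$ and $\dom(D_p)$ carefully and to note that the two notions of $\partial u$ literally agree on the holomorphic class, so the integrability condition ``$\partial u\in L^2$'' and ``$\partial u\in A^2$'' become equivalent once $u$ itself is holomorphic.
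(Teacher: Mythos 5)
Your argument is correct and follows essentially the same route as the paper: both directions reduce to observing that a holomorphic form is smooth (so distributional and classical $\partial$ agree), that $\partial$ preserves holomorphicity, and then unfolding the two domain definitions. The paper's proof is just a terser version of the same bookkeeping.
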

\begin{proof}
	If $u\in \dom(\partial) \subset A^2_{(p,0)}(M,h, e^{-\psi})$, then $u$ has holomorphic coefficients and $\partial u \in A^2_{(p+1,0)}(M,h, e^{-\psi}) \subset L^2_{(p+1,0)}(M, h, e^{-\psi})$. Hence $u\in \dom(D)\cap A^2_{(p,0)}(M, h, e^{-\psi})$, {as desired}. Conversely, if $u$ belongs to the right hand side of \cref{e211}, then $u$ has holomorphic coefficients and \note{$|\partial u|_{h}$} is $L^2$-integrable with respect to $d\mu$. This clearly implies $u\in \dom(\partial)$. \note{The proof is complete.}
\end{proof}

Suppose that $\dom(\partial)$ is dense in $A^2_{(p,0)}(M, h,e^{-\psi})$. Then the Hilbert space adjoint of $\partial$ in $A^2_{(p,0)}(M, h, e^{-\psi})$ is well-defined and denoted by $\partial^{\ast}$ (see \cite[Definition 4.1]{haslinger/book}). Clearly,
\begin{equation*}
\dom(\partial^{\ast})
=
\left\{
g \in A^2_{(p+1,0)}(M, h, e^{-\psi}) \colon 
f \mapsto( \partial f, g)_{h,\psi} \ \text{is continuous on } \dom(\partial)
\right\}.
\end{equation*}
Since $(M,h)$ is complete, $D$ is densely defined in $L^2_{(p,0)}(M, h,e^{-\psi})$ and hence the Hilbert space adjoint $D^{\ast} $ of $D$ is well-defined. Assume that $g \in \dom(D^{\ast})$, then $f \mapsto ( \partial f, g )_{h,\psi}$
is continuous on $\dom(D)$ and hence on $\dom(\partial)$ since $\dom(\partial) \subset \dom(D)$. Thus, we obtain
\begin{proposition} Suppose that $\dom(\partial)$ is dense in $A^2_{(p,0)}(M, h,e^{-\psi})$. Then
	\begin{equation}\label{e:domdom}
	\dom(D^{\ast}) \cap A^2_{(p+1,0)}(M,h, e^{-\psi})
	\subset 
	\dom(\partial^{\ast}).
	\end{equation}
\end{proposition}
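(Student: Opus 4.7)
The plan is essentially a definitional chase: unpack what it means for $g$ to lie in $\dom(D^{\ast})$, restrict the associated continuous functional to the smaller domain $\dom(\partial)$, and conclude that $g \in \dom(\partial^{\ast})$. This is precisely the short argument sketched in the paragraph immediately preceding the statement, so I would simply expand it into a self-contained proof.

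First I would fix an arbitrary $g \in \dom(D^{\ast}) \cap A^2_{(p + 1,0)}(M,h,e^{-\psi})$. By the definition of the Hilbert space adjoint of the (densely defined, closed) operator $D$ on $L^2_{(p,0)}(M,h,e^{-\psi})$, the linear functional
\begin{equation*}
\Lambda \colon \dom(D) \to \bC, \qquad \Lambda(f) = (Df, g)_{h,\psi},
\end{equation*}
is continuous with respect to the $L^2$-norm on $\dom(D)$. Next, by the previous proposition,
\begin{equation*}
\dom(\partial) = \dom(D) \cap A^2_{(p,0)}(M,h,e^{-\psi}) \subset \dom(D),
\end{equation*}
and on this subspace the distributional derivative $Df$ coincides with $\partial f$. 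Hence the restriction of $\Lambda$ to $\dom(\partial)$ is exactly the functional $f \mapsto (\partial f, g)_{h,\psi}$ whose continuity defines membership in $\dom(\partial^{\ast})$.

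The only remaining point is that the $A^2$-inner product on $\dom(\partial)$ is the restriction of the $L^2$-inner product (since $A^2_{(p,0)}(M,h,e^{-\psi})$ is a closed subspace of $L^2_{(p,0)}(M,h,e^{-\psi})$), so continuity in the ambient $L^2$-norm immediately implies continuity in the $A^2$-norm. Therefore $g \in \dom(\partial^{\ast})$, which gives the desired inclusion.

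There is no real obstacle here: the proof is purely formal, and the substantive content has already been established in the preceding proposition identifying $\dom(\partial)$ as $\dom(D) \cap A^2_{(p,0)}$. The one point worth being careful about is not to conflate $D^{\ast}$ and $\partial^{\ast}$ beyond what the statement claims — the reverse inclusion (and the equality $D^{\ast}\eta = \partial^{\ast}\eta$) is the content of \cref{thm:1.1a} and requires the additional holomorphicity hypothesis on $(\bar{\partial}\psi)^{\sharp}$, whereas the present inclusion holds unconditionally.
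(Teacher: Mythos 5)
Your proof is correct and follows exactly the paper's own argument, which appears in the paragraph preceding the proposition: the functional $f \mapsto (\partial f, g)_{h,\psi}$ is continuous on $\dom(D)$ by definition of $\dom(D^{\ast})$, hence on the subspace $\dom(\partial) \subset \dom(D)$, and the $A^2$-norm is the restriction of the $L^2$-norm. Your added remarks about the closedness of $A^2_{(p,0)}$ and the distinction from the conditional equality $D^{\ast}\eta = \partial^{\ast}\eta$ are accurate but not needed beyond what the paper states.
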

It is natural to ask when is the left hand side of \cref{e:domdom} dense in the right hand side? Suppose that $v$ is a $(p+1,0)$-form in $\dom(\partial^{\ast})$; in particular, $v$ has holomorphic coefficients. Then $v\in \dom(D^{\ast})$ if and only if
$u \mapsto ( \partial u, v )_{h,\psi} = ( P_{h,\mu}(\partial u) , v )_{h,\psi}$ is continuous on $\dom(D)$, where $P_{h,\psi}$ is the Bergman orthogonal projection
\begin{equation}\label{e:bp}
P_{h,\psi} \colon L^2_{(p,0)}(M,h, e^{-\psi}) \longrightarrow A^2_{(p,0)}(M,h, e^{-\psi}),
\end{equation}
which is well-defined under the admissibility condition in the sense of \cite{pasternak-winiarski} (the map is \textit{a priori} continuous on the subspace $\dom(D) \cap A^2(M, h, e^{-\psi}))$.
\begin{proposition}
	Suppose that $\dom(\partial)$ is dense in $A^2_{(p,0)}(M,h, e^{-\psi})$. Then the operators $\partial$ and $\partial^{\ast}$ are closed operators.
\end{proposition}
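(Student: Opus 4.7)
The statement has two parts: closedness of $\partial$ and closedness of $\partial^{\ast}$. The second is automatic, since the Hilbert-space adjoint of \emph{any} densely defined operator is closed (a standard fact from unbounded operator theory), and $\partial$ is densely defined by hypothesis. So the substantive content lies in showing that $\partial\colon A^2_{(p,0)}(M,h,e^{-\psi}) \to A^2_{(p+1,0)}(M,h,e^{-\psi})$ is closed.

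My plan is to show closedness directly from the definition. Take a sequence $u_n \in \dom(\partial)$ with $u_n \to u$ in $A^2_{(p,0)}(M,h,e^{-\psi})$ and $\partial u_n \to v$ in $A^2_{(p+1,0)}(M,h,e^{-\psi})$; I want to conclude $u \in \dom(\partial)$ and $\partial u = v$. The key tool is that on each local holomorphic coordinate chart $U \subset M$ the smooth positive weight $e^{-\psi}$ and the smooth positive-definite matrix $[h_{j\bar k}]$ (hence $\dvol_h$) are bounded above and below by positive constants on any compact subset $K \Subset U$. Consequently, $L^2(M,h,e^{-\psi})$-convergence restricts to ordinary $L^2_{\mathrm{loc}}$-convergence of each coefficient in local coordinates.

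The next step uses the sub-mean-value inequality for holomorphic functions: if $f_n$ are holomorphic on a neighborhood of $\overline{B(z_0,2r)} \subset U$ and $f_n \to f$ in $L^2(B(z_0,2r))$, then $f_n \to f$ locally uniformly on $B(z_0,r)$ together with all partial derivatives, and $f$ is holomorphic. Applying this coefficient by coefficient to the $u_n$, I conclude that the limit $u$ is represented by a holomorphic $(p,0)$-form, that $u_n \to u$ and $\partial u_n \to \partial u$ locally uniformly, and therefore $\partial u$ is a well-defined holomorphic $(p+1,0)$-form.

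Finally, since $\partial u_n \to v$ in $A^2_{(p+1,0)}(M,h,e^{-\psi})$, the same local argument gives $\partial u_n \to v$ locally uniformly, so by uniqueness of limits $v = \partial u$ pointwise. Thus $\partial u = v \in A^2_{(p+1,0)}(M,h,e^{-\psi})$, so $u \in \dom(\partial)$ and $\partial u = v$, which proves $\partial$ is closed. I do not expect a real obstacle here; the only point requiring any care is checking that the local $L^2$-estimate really follows from the global one despite the nonconstant metric factor and the weight, which is handled by the local boundedness of $e^{-\psi} \det[h_{j\bar k}]$ away from zero and infinity on compacts.
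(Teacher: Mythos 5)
Your proof is correct and follows essentially the same route as the paper: closedness of $\partial^{\ast}$ is reduced to the general fact that adjoints of densely defined operators are closed, and closedness of $\partial$ is obtained by upgrading weighted $L^2$-convergence of holomorphic coefficients to locally uniform convergence with derivatives, forcing $\partial u = v$. The only cosmetic difference is that you prove the locally-uniform-convergence step directly via the sub-mean-value inequality, whereas the paper outsources it to the ``admissibility'' of the measure in the sense of Pasternak--Winiarski.
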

\begin{proof} By the general theory for closed unbounded operators (see, e.g., \cite[Lemma~4.5]{haslinger/book}), we only need to prove the statement for $\partial$. Suppose that $\{u_j\}_j$ is a sequence in $A^2_{(p,0)}(M,h, e^{-\psi})$ that converges 
	to $u$ in $L^2$-topology and suppose that $\partial u_j$ converges to $v$ \note{also in $L^2$-topology}. \note{ Since $d\mu$ is a positive measure with smooth positive density with respect to the Lebesgue measure in any local coordinate patch $U$, it is ``admissible'' in the sense of \cite{pasternak-winiarski}}. Thus, the coefficients of $u_j$ converge
	to those of $u$ uniformly on compact sets of $U$ and hence $\partial u = v$. Since $v\in A^2_{(p+1,0)}(M,h, e^{-\psi})$ by assumption, we obtain that $u\in \dom(\partial)$. The proof is complete.
\end{proof}
Thus, if $\dom(\partial)$ is dense in $A^2_{(p,0)}(M,h, e^{-\psi})$, then $\dom(\partial^{\ast})$ is dense in $A^2_{(p+1,0)}(M,h, e^{-\psi}),$ see, e.g., \cite[Lemma~4.6]{haslinger/book}.
\begin{proposition}\label{prop:25}
	Suppose that $\dom(\partial)$ is dense in $A^2_{(p,0)}(M, h, e^{-\psi})$ and $v\in \dom(\partial^{\ast})$.
	If $w \in L^2_{(p,0)}(M, h, e^{-\psi})$ such that
	\begin{equation}\label{e:1}
	\left( \partial u, v \right)_{h,\psi} = \left( u, w \right)_{h,\psi} ,\quad \forall\, u \in \dom(\partial),
	\end{equation}
	then
	\begin{equation}\label{e:2a}
	\partial^{\ast} v = P_{h,\psi}(w).
	\end{equation}
	In particular, if $v \in \dom(D^{\ast}) \cap A^2_{(p+1,0)}(M, h, e^{-\psi})$ then
	\begin{equation}\label{e:3}
	\partial^{\ast} v = P_{h,\psi}\left( D^{\ast} v \right).
	\end{equation}
\end{proposition}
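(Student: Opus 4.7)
The plan is to use the orthogonal decomposition $L^2_{(p,0)} = A^2_{(p,0)} \oplus \ker P_{h,\psi}$ to reduce the identity \cref{e:1} to one whose right-hand side is automatically a holomorphic form, and then invoke uniqueness of the Hilbert-space adjoint. As noted just before the proposition (and in the proof of Proposition~2.4), the measure $d\mu = e^{-\psi}\dvol_h$ is admissible in the sense of \cite{pasternak-winiarski}, so $A^2_{(p,0)}(M,h,e^{-\psi})$ is a closed subspace of $L^2_{(p,0)}(M,h,e^{-\psi})$ and the Bergman projection $P_{h,\psi}$ is a genuine orthogonal projection. I will use this fact freely.

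The first step is to write $w = P_{h,\psi}(w) + w_{0}$ with $w_{0}\perp A^2_{(p,0)}$. Since every $u\in\dom(\partial)$ lies in $A^2_{(p,0)}(M,h,e^{-\psi})$, we have $(u,w_{0})_{h,\psi}=0$, and therefore
\begin{equation*}
(u,w)_{h,\psi} = (u,P_{h,\psi}(w))_{h,\psi}, \qquad u \in \dom(\partial).
\end{equation*}
Substituting into the hypothesis \cref{e:1} gives
\begin{equation*}
(\partial u, v)_{h,\psi} = (u, P_{h,\psi}(w))_{h,\psi}, \qquad u \in \dom(\partial).
\end{equation*}
Now $P_{h,\psi}(w)\in A^2_{(p,0)}(M,h,e^{-\psi})$, and $\dom(\partial)$ is dense in $A^2_{(p,0)}(M,h,e^{-\psi})$ by assumption. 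By the very definition of the Hilbert-space adjoint of $\partial$ acting between the Bergman spaces, the element of $A^2_{(p,0)}$ that represents the continuous functional $u\mapsto(\partial u,v)_{h,\psi}$ is unique and equals $\partial^{\ast}v$. Hence $\partial^{\ast} v = P_{h,\psi}(w)$, which is \cref{e:2a}.

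For the second assertion, suppose $v\in \dom(D^{\ast})\cap A^2_{(p+1,0)}(M,h,e^{-\psi})$. Since $\dom(\partial)\subset \dom(D)$ (by \cref{e211}) and $\partial u = Du$ on this subspace, the definition of $D^{\ast}$ yields
\begin{equation*}
(\partial u, v)_{h,\psi} = (Du, v)_{h,\psi} = (u, D^{\ast} v)_{h,\psi}, \qquad u \in \dom(\partial).
\end{equation*}
Thus \cref{e:1} is satisfied with $w := D^{\ast} v \in L^2_{(p,0)}(M,h,e^{-\psi})$, and the already-proven \cref{e:2a} gives $\partial^{\ast} v = P_{h,\psi}(D^{\ast} v)$, which is \cref{e:3}.

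There is no real obstacle here; the only point to watch is that the uniqueness in the definition of $\partial^{\ast}v$ requires density of $\dom(\partial)$ in the Bergman space (supplied by hypothesis) and that $P_{h,\psi}$ is a bona fide orthogonal projection on $L^2_{(p,0)}$ (supplied by admissibility). Everything else is a direct application of the orthogonal decomposition and the two adjoint definitions.
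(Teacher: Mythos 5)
Your argument is correct and is essentially the paper's own proof: both rest on the density of $\dom(\partial)$ in $A^2_{(p,0)}(M,h,e^{-\psi})$ together with the fact that $P_{h,\psi}$ is the orthogonal projection, the only cosmetic difference being that you project $w$ first and then invoke uniqueness of the adjoint representative, whereas the paper tests $w$ against $\partial^{\ast}v$ directly and projects afterwards. The deduction of \cref{e:3} from \cref{e:2a} via $w=D^{\ast}v$ is identical in both.
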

\begin{proof} 
	Since $v\in \dom(\partial^{\ast})$, then for all $u\in \dom(\partial)$, one has $\left( u, \partial^{\ast} v \right)_{h,\psi} = \left( \partial u, v \right)_{h,\psi}$. Thus
	\begin{equation}
	\left( u, w \right)_{h,\psi} = \left( u, \partial^{\ast} v \right)_{h,\psi}, \quad \forall\ u\in \dom(\partial).
	\end{equation}
	Thus, by the density of $\dom(\partial)$ in $A^2_{(p,0)}(M, h, e^{-\psi})$ and the definition of
	the Bergman projection, \cref{e:2a} follows. If, in addition, $v \in \dom(D^{\ast})$, then \cref{e:1} holds with
	$w = D^{\ast} v$ and hence \cref{e:3} follows.
\end{proof}
We point out that \cref{e:3} is an extension of equation (3.3) in \cite{haslinger} in which \cref{e:1} was verified using the Green-Gauß theorem.
We shall improve this result in \cref{prop:31}.

If $(M, h)$ is a Hermitian manifold, then there exists a canonical linear connection on $M$, the Chern connection of $h$, which parallelizes both the metric $h$ and the complex structure of the underlying manifold (see, e.g., \cite{kodaira--morrow,griffiths}). In local coordinates $z^1,\dots , z^n$, the nonvanishing Christoffel symbols for the Chern connection are 
\begin{equation}
\Gamma^i_{jk} = h^{i\bar{l}} \partial_j h_{k\bar{l}}, \quad \Gamma^{\bar{i}}_{\bar{j}\bar{k}} = \overline{\Gamma^i_{jk}}.
\end{equation}
Since $\partial_j \left(h^{i\bar{l}} h_{k\bar{l}}\right) = \partial_j(\delta_{ik}) = 0$, we also have $\Gamma^{i}_{jk} = - h_{k\bar{l}}\partial_j h^{i\bar{l}}
$. The covariant derivatives can be explicitly expressed in local coordinates. For examples, if in local coordinate $u = u_k dz^k$ is a $(1,0)$-form, then
\begin{equation}\label{e:cd}
\nabla_j u_k = \partial_j u_k - \Gamma^l_{jk} u_l,
\quad
\nabla_{\bar{j}} u_k = \partial_{\bar{j}} u_k.
\end{equation}
Note that in the second equation, the Christoffel symbols of ``mixed type'' vanish and hence the covariant derivative of $(1,0)$-forms along $(0,1)$-direction reduces essentially to the partial derivatives of its components.

For a general Hermitian metric, the torsion tensor may be nontrivial; we define the torsion $T^i_{jk}$ by
\begin{equation}
T^i_{jk} = \Gamma^i_{jk} - \Gamma^i_{kj}, \quad T^{\bar{i}}_{\bar{j}\bar{k}} = \overline{T^i_{jk}}
\end{equation}
The torsion $(1,0)$-form is then obtained by taking the trace:
\begin{equation}
\tau = T^{i}_{ji} d z^j.
\end{equation}
In local coordinates, the volume element is given by $\dvol_h = \det (h_{j\bar{l}})\, d\lambda$, where $d\lambda$ is the Lebesgue measure in that coordinate patch.
Thus, if $\psi$ is an weight function, then we can write (locally) $d\mu = e^{-\psi}\dvol = e^{-\varphi} d\lambda$, with
\begin{equation}
\varphi = \psi - \log \det (h_{j\bar{l}}).
\end{equation}
Therefore, since $\Gamma^{\bar{i}}_{\bar{k}\bar{i}} = \partial _{\bar{k}} \log \det (h_{j\bar{l}})$ (see, e.g., \cite[p. 111]{kodaira--morrow}, {but mind that the K\"{a}hlerian condition is not assumed here}), we obtain
\begin{equation}\label{e:225}
\varphi_{\bar{k}}
=
\psi_{\bar{k}} - \partial _{\bar{k}} \log \det (h_{j\bar{l}}) = \psi_{\bar{k}} - \Gamma^{\bar{i}}_{\bar{k} \bar{i}} .
\end{equation}
Suppose that $u = u_j dz^j$ is a smooth $(1,0)$-form and $v$ is compactly supported function. We assume that $v$ has support contained in a coordinate patch $(U, z)$. Then 
\begin{align}
( u, \partial v )_{h,\psi}
& =
\int_U u_j \overline{v_k}\, h^{j\bar{k}} e^{-\varphi} d\lambda \notag \label{e37}\\
& =
-\int_U \partial_{\bar{k}} \left(u_j h^{j\bar{k}} e^{-\varphi}\right) \overline{v}\, d\lambda \notag \\
& = - \int_M \left(\partial_{\bar{k}} u_j h^{j\bar{k}} + u_j \partial_{\bar{k}} h^{j\bar{k}} - u_j\varphi_{\bar{k}}h^{j\bar{k}}\right) \overline{v}\, d\mu. 
\end{align}
where $v_k := \frac{\partial v}{\partial z^k}$. Observe that
\begin{align}\label{e:227}
\partial_{\bar{k}} h^{j\bar{k}} 
= - h^{j\bar{k}} \Gamma^{\bar{l}}_{\bar{l}\bar{k}} 
& = h^{j\bar{k}} \left( \tau_{\bar{k}} - \Gamma^{\bar{l}}_{\bar{k}\bar{l}} \right) \notag \\
& = h^{j\bar{k}} \left( \tau_{\bar{k}}- \partial_{\bar{k}} \log \det(h_{l\bar{k}})\right) \notag \\
& = h^{j\bar{k}} \left( \tau_{\bar{k}} + \varphi_{\bar{k}} - \psi_{\bar{k}} \right).
\end{align}
Plugging this into \cref{e37}, we obtain the integration-by-part formula:
\begin{equation}\label{e:ibp}
\left( u, \partial v\right)_{h,\psi}
=
- \int_M h^{j\bar{k}} \left[\partial_{\bar{k}} u_j + u_j (\tau_{\bar{k}} - \psi_{\bar{k}})\right] \bar{v}\, d\mu. 
\end{equation}
For the case of general compactly supported $v$, we can use the partition of unity to reduce to the case above; we omit the details.

Thus, if additionally $(M,h)$ is a complete manifold (so that the Andreotti--Vesentini density lemma applies) and $u\in \dom(D^{\ast})$, then we have a local expression for $D^{\ast} u$ as follows (see, e.g.,  \cite[(6.20)]{griffiths} or \cite{berger--dallara--son}):
\begin{equation}\label{e39}
D^{\ast} u 
=
- \nabla_{\bar{k}} u^{\bar{k}} + (\psi_{\bar{k}} - \tau_{\bar{k}}) u^{\bar{k}}, \quad u^{\bar{k}} : = h^{j\bar{k}} u_j.
\end{equation}

We shall derive a similar formula for the adjoint $\partial^{\ast}$ on
$\dom(\partial^{\ast})$. It turns out that, similar to the Segal--Bargmann case \cite{haslinger}, the adjoint $\partial^{\ast}$ is closely related to the Bergman projection \cref{e:bp}, which is nonlocal.
\begin{proposition}\label{prop:31}
	Let $(M,h)$ be a complete Hermitian manifold and $e^{-\psi}$ a smooth weight on~$M$. Suppose that $\dom(\partial)$ is dense in $A^2_{(1,0)}(M,h,e^{-\psi})$ and let $u = u_j dz^j \in A^2_{(1,0)}(M,h, e^{-\psi}).$
	Let $\partial^{\ast}$ be the Hilbert space adjoint of $\partial$.
	If $\langle u , \partial \psi - \tau \rangle_h \in L^2(M,h, e^{-\psi})$, then $u$ belongs to $\dom(D^{\ast})$ and hence $u\in \dom(\partial^{\ast})$. Moreover,
	\begin{equation}\label{e:dbar-formula}
	\partial^{\ast} u = P_{h, \psi}\left(\langle u , \partial \psi - \tau \rangle_h\right).
	\end{equation}
	where $\tau = \tau_{j} dz^{j}$ is the torsion $(1,0)$-form.
\end{proposition}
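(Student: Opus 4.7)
My strategy is to combine the integration-by-parts identity \cref{e:ibp} with the holomorphicity of the coefficients of $u$, pass from compactly supported test functions to the full domain $\dom(D)$ using the Andreotti--Vesentini density lemma (where completeness enters), and then invoke \cref{prop:25} to convert the resulting formula for $D^{\ast}u$ into one for $\partial^{\ast}u$.

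First, I would verify the identity
\begin{equation*}
(u,\partial v)_{h,\psi} = \bigl(\langle u,\partial\psi-\tau\rangle_h,\, v\bigr)_{h,\psi}
\end{equation*}
for every $v\in C^{\infty}_c(M)$. For $v$ supported in a single coordinate patch, \cref{e:ibp} reads
\begin{equation*}
(u,\partial v)_{h,\psi} = -\int_M h^{j\bar k}\bigl[\partial_{\bar k} u_j + u_j(\tau_{\bar k}-\psi_{\bar k})\bigr]\,\bar v\, d\mu.
\end{equation*}
Since $u\in A^2_{(1,0)}(M,h,e^{-\psi})$ has holomorphic coefficients, $\partial_{\bar k} u_j = 0$, and the remaining terms assemble into $h^{j\bar k} u_j(\psi_{\bar k}-\tau_{\bar k}) = \langle u,\partial\psi - \tau\rangle_h$ by the definition of the pointwise inner product applied to the $(1,0)$-form $\partial\psi-\tau=(\psi_j-\tau_j)\,dz^j$. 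A standard partition-of-unity argument then removes the support restriction.

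Next, set $w := \langle u,\partial\psi-\tau\rangle_h$; by hypothesis $w\in L^2(M,h,e^{-\psi})$. The key step is to extend the identity $(u,\partial v)_{h,\psi} = (w,v)_{h,\psi}$ from $v\in C^{\infty}_c(M)$ to all $v\in\dom(D)$. Because $(M,h)$ is complete, the Andreotti--Vesentini density lemma produces, for each such $v$, a sequence $v_n\in C^{\infty}_c(M)$ with $v_n\to v$ and $\partial v_n\to Dv$ in $L^2$; passing to the limit yields $(u,Dv)_{h,\psi} = (w,v)_{h,\psi}$ for every $v\in\dom(D)$. The Cauchy--Schwarz bound $|(w,v)_{h,\psi}|\leqslant \|w\|\,\|v\|$ then shows that $v\mapsto(u,Dv)_{h,\psi}$ is continuous on $\dom(D)$, so $u\in\dom(D^{\ast})$ with $D^{\ast}u = w$.

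The conclusion is now automatic: since $u\in A^2_{(1,0)}(M,h,e^{-\psi})\cap\dom(D^{\ast})$, the particular case of \cref{prop:25} (with $p=0$) gives $\partial^{\ast}u = P_{h,\psi}(D^{\ast}u) = P_{h,\psi}(w)$, which is precisely \cref{e:dbar-formula}. The main obstacle is the density step in the third paragraph: one must make sure that the Andreotti--Vesentini approximation allows simultaneous $L^2$-control of $v_n\to v$ and $\partial v_n\to Dv$, and it is exactly here that completeness of $(M,h)$ is indispensable.
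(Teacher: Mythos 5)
Your proof is correct and follows essentially the same route as the paper's: the integration-by-parts identity \cref{e:ibp}, the vanishing of $\partial_{\bar{k}}u_j$ by holomorphicity, a completeness-based cutoff argument, and the passage to the Bergman projection via \cref{prop:25}. The only organizational difference is that the paper applies the cutoff $\chi_R$ directly to $u$ inside the pairing $\left( \chi_R\, u , \partial v\right)_{h,\psi}$ with $v\in\dom(D)$ arbitrary, whereas you keep $v$ smooth and compactly supported at first and then approximate a general $v\in\dom(D)$ in the graph norm; both devices exploit completeness in exactly the same way.
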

\begin{remark}
	This proposition implies that if $|\partial \psi - \tau|_h$ is bounded, then $\partial^{\ast}$ is a bounded operator. \cref{ex:44} exhibits a situation with finite dimensional generalized Bergman spaces so that $\partial$ and $\partial^{\ast}$ are bounded operators and $|\partial \psi - \tau|_h$ is bounded. For the Segal-Bargmann model (cf. \cite{haslinger}), $|\partial \psi - \tau|_h = \left| z_j\, dz^j \right|$ is unbounded and so are $\partial$ and $\partial^{\ast}$. See also Section~5 where we exhibit two unbounded examples.
	
	Compared to the local expression \cref{e39}, the formula for $\partial^{\ast}$ in \cref{e:dbar-formula} is global as it involves the Bergman projection.
\end{remark}
\begin{proof}[Proof of \cref{prop:31}]
	We shall use the usual cut-off procedure on complete Riemannian
	manifolds (see, e.g., \cite[pp. 48]{ohsawa/book}).
	For a fixed point $p_0 \in M$, the distance function $d(\cdot, p_0)$ is Lipschitz on $M$. Let $\rho(x)$ be a smoothing 
	of $d(x,x_0)$ and choose a function $\chi \colon \mathbb{R} \to \mathbb{R}$ such that $\chi\bigl |_{(-\infty, 1)} \equiv 1$ and $\mathrm{supp}\, \chi \subset (-\infty, 2]$. Put
	\begin{equation}
	\chi_R (x) = \chi\left(\rho(x)/R\right), 
	\quad R > 1.
	\end{equation}
	Then $\chi_R$ has compact support and $|\partial \chi_R| \leqslant c/R$ for some $c>0$.
	Suppose that $u = u_j dz^j \in A^2_{(1,0)}(M,h, e^{-\psi})$ and $v\in \dom(D)$.
	In local coordinates, $\partial v = v_k dz^k$ where $v_k = \partial v/\partial z^k$. Using integration by parts \cref{e:ibp}
	\begin{align}
	\left( \chi_R\, u , \partial v\right)_{h,\psi}
	& =
	-\int_M h^{j\bar{k}} \left[\partial_{\bar{k}} (\chi_R\, u_j) + \chi_R u_j (\tau_{\bar{k}} - \psi_{\bar{k}}) \right] \bar{v}\, d\mu \notag \\
	& =
	- \int_M h^{j\bar{k}} u_j (\partial_{\bar{k}} \chi_R)\, \bar{v} \, d\mu + \int_M h^{j\bar{k}} \chi_R u_j (\tau_{\bar{k}} - \psi_{\bar{k}}) \, \bar{v}\, d\mu
	\end{align}
	Here we use $\partial_{\bar{k}} u_j = 0$ since $u$ is holomorphic.
	
	Observe that the first integral tends to $0$ as $R\to \infty$. Indeed,
	\begin{equation}
	\left|\int_M h^{j\bar{k}} u_j (\partial_{\bar{k}} \chi_R)\, \bar{v} \, d\mu\right|
	\leqslant
	\int_M |v|\, |u|_h\, |\partial \chi_R|_h \, d\mu \to 0 \quad \text{as}\ R\to \infty
	\end{equation}
	since $|\partial \chi_R| < c/R$. Letting $R\to \infty$, we obtain that 
	\begin{equation}
	( u , \partial v )_{h,\psi}
	=
	\lim_{R\to \infty}\int_M \chi_R \bar{v} \langle u, {\partial} \psi - \tau \rangle_h\, d\mu.
	\end{equation}
	Thus, if $\langle u , \partial \psi - \tau \rangle_h \in L^2(M,h, e^{-\psi})$, then the limit on the right hand side is 
	\begin{align}
	\lim_{R\to \infty}\int_M \chi_R \bar{v} \langle u,\partial \psi -\tau \rangle_h\, d\mu
	& =
	\int_M \bar{v} \langle u , \partial \psi - \tau \rangle_h \, d\mu \notag \\
	& =
	\int_M \bar{v} P_{h,\psi}(\langle u , \partial \psi - \tau \rangle_h) d\mu,
	\end{align}
	since $v$ is holomorphic. Therefore, $u\in \dom(\partial^{\ast})$ and
	\begin{equation}
	\partial^{\ast} u = P_{h,\psi}\left(\langle u , \partial \psi - \tau \rangle_h\right).
	\end{equation}
	The proof is complete.
\end{proof}

In view of this result, we call $\delta u := P_{h,\psi}\left(\langle u , \partial \psi - \tau \rangle_h\right)$ the \textit{formal} adjoint of $\partial$, whenever the right hand side is defined.

\begin{example}[cf. \cite{haslinger}]
	Let $M = \mathbb{C}^n$ with the standard Euclidean metric and $\psi (z) = |z|^2$. Let $u_jdz^j\in A^2_{(1,0)}(\mathbb{C}^n, e^{-|z|^2})$. If $\sum_{j=1}^{n} z^j u_j \in L^2(\mathbb{C}^n, e^{-|z|^2})$ then $u \in \dom(\partial^\ast)$ 
	and $\partial^\ast u = \sum_{j=1}^{n} z^j u_j$.
\end{example}
Next, we give a condition under which $\partial^{\ast}$ agrees with $D^{\ast}$ for $u\in \dom(D^{\ast})$ having holomorphic coefficients. To this end, the following notion is crucial for us:
\begin{definition}
	Suppose that $\xi = \xi^{j} \partial_{j}$ is a $(1,0)$-vector field expressed in a local coordinate patch $U$. We say that $\xi$ is \textit{holomorphic} if each coefficient $\xi^j$ is holomorphic in $U$.
\end{definition}
The notion of a holomorphic $(1,0)$-vector field does not depend on the choice of coordinate. For a $(0,1)$ form $w = w_{\bar{k}} \, d\bar{z}^k$, the ``musical operator'' $\sharp$ acts on $w$ and produces an $(1,0)$ vector field $w^{\sharp} : = h^{k\bar{j}} w_{\bar{j}} \, \partial_{k}$.
If $u$ and $v$ are $(1,0)$ forms, then $\langle u, v\rangle_h = (u, \overline{v}^{\sharp})$ where the right hand side is the dual pairing between vectors and covectors. Thus, if $u \in A^2_{(1,0)}(M, h, e^{-\psi})$ and $\overline{v}^{\sharp}$ is a holomorphic vector field, then $\langle u, v\rangle _h$ is a holomorphic function. Thus, we obtain the following
\begin{corollary}\label{cor:33}
	Suppose that $\partial$ is densely defined in the weighted Bergman space and $(\bar{\partial}\psi - \bar{\tau} )^{\sharp}$ is a holomorphic vector field, then for each $u \in \dom(D^{\ast}) \cap A^2_{(1,0)}(M, h, e^{-\psi})$, one has
	\begin{equation}\label{e:2}
	\partial^{\ast} u = D^{\ast} u.
	\end{equation}
\end{corollary}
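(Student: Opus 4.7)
The plan is to identify both $\partial^{\ast}u$ and $D^{\ast}u$ with the same explicitly written holomorphic function, via \cref{prop:25} together with the local formula \cref{e39} for $D^{\ast}$. First I would specialize \cref{e39} to the situation at hand. Since $u=u_j\,dz^j\in A^2_{(1,0)}(M,h,e^{-\psi})$ has holomorphic coefficients, \cref{e:cd} forces $\nabla_{\bar k}u_j=\partial_{\bar k}u_j=0$, so the covariant-derivative term on the right-hand side of \cref{e39} drops out and the formula collapses to
\begin{equation*}
D^{\ast}u=(\psi_{\bar k}-\tau_{\bar k})\,h^{j\bar k}u_j=\langle u,\partial\psi-\tau\rangle_h.
\end{equation*}
Both sides are invariantly defined, so this is a global identity on $M$, and the hypothesis $u\in\dom(D^{\ast})$ guarantees that the right-hand side lies in $L^2(M,h,e^{-\psi})$.

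Next I invoke the holomorphicity hypothesis. In any coordinate patch one has the factorization
\begin{equation*}
\langle u,\partial\psi-\tau\rangle_h=u_j\cdot\bigl(h^{j\bar k}(\psi_{\bar k}-\bar\tau_{\bar k})\bigr),
\end{equation*}
and the expression in parentheses is exactly the $j$-th component of the $(1,0)$-vector field $(\bar\partial\psi-\bar\tau)^{\sharp}$, which is holomorphic by assumption. Multiplying by the holomorphic function $u_j$ produces a holomorphic function, so $D^{\ast}u$ is itself holomorphic and hence lies in $A^2(M,h,e^{-\psi})$.

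Finally, applying \cref{prop:25} to $u\in\dom(D^{\ast})\cap A^2_{(1,0)}(M,h,e^{-\psi})$ gives $\partial^{\ast}u=P_{h,\psi}(D^{\ast}u)$, and because $D^{\ast}u$ already lies in the Bergman space the Bergman projection fixes it, yielding $\partial^{\ast}u=D^{\ast}u$ as desired. The main subtlety in carrying this out is the parsing of \cref{e39}: the notation $\nabla_{\bar k}u^{\bar k}$ must be read as $h^{j\bar k}\nabla_{\bar k}u_j$ (the covariant derivative taken on $u_j$ before the index is raised), which is the reading forced by the integration-by-parts identity \cref{e:ibp}; with this convention the vanishing for holomorphic $u$ is automatic and the remainder of the argument is essentially a verification.
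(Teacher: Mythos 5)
Your proposal is correct and follows essentially the same route as the paper: both arguments reduce $D^{\ast}u$ to $\langle u,\partial\psi-\tau\rangle_h$ (the covariant-derivative term in \cref{e39} vanishing because $u$ is holomorphic), observe that the holomorphicity of $(\bar{\partial}\psi-\bar{\tau})^{\sharp}$ makes this expression holomorphic, and conclude that the Bergman projection in $\partial^{\ast}u=P_{h,\psi}(D^{\ast}u)$ acts as the identity. The only cosmetic difference is that you route the last step through \cref{prop:25} while the paper cites \cref{e:dbar-formula} from \cref{prop:31}; these are interchangeable here, and your explicit remark on why $\nabla_{\bar{k}}u^{\bar{k}}=0$ for holomorphic $u$ is a point the paper leaves implicit.
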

\begin{proof} If $u \in \dom(D^{\ast}) \cap A^2_{(1,0)}(M, h, e^{-\psi})$, then clearly
	\begin{equation}
	D^\ast u = \langle u , \partial \psi - \tau \rangle_h \in L^2(M,h,e^{-\psi})
	\end{equation}
	and thus $u\in \dom(\partial^\ast)$. 
	
	On the other hand, $(\bar{\partial}\psi - \bar{\tau})^{\sharp}$ is the $(1,0)$-vector field expressed in local coordinates by
	\begin{equation}
	(\bar{\partial}\psi -\bar{\tau} )^{\sharp} = h^{j\bar{k}}\left(\psi_{\bar{k}} - \tau_{\bar{k}}\right)\frac{ \partial}{\partial z^j}.
	\end{equation}
	The holomorphicity to $(\bar{\partial}\psi - \bar{\tau})^{\sharp}$ means that for each $j$, $h^{j\bar{k}}\left(\psi_{\bar{k}} - \tau_{\bar{k}}\right)$ is holomorphic. Thus, by \cref{e:dbar-formula}, $\partial^\ast u = P_{h,\psi}(D^{\ast} u) = D^{\ast} u$. The proof is complete.
\end{proof}
\begin{remark}
	It is worth pointing out that the condition on the holomorphicity of $(\bar{\partial}\psi - \bar{\tau})^{\sharp}$ as in \cref{cor:33} also plays an important role in several (similar, but not directly related) problems in the literature, especially for the K\"ahler case. For example, the condition is necessary and sufficient for the weighted Hodge Laplacian with weight $d\mu: = e^{-\psi} \dvol$ on Kähler manifolds to preserve the type of forms (i.e., to send $(p,q)$-forms into $(p,q)$-forms, see, e.g., \cite{munteanu--wang}). It is also necessary and sufficient for the weighted Dirichlet forms $d^{\ast}_\psi d f$ corresponding to the same weight to be of the form $Zf$ for some holomorphic $(1,0)$-vector field $Z$ (see, e.g., \cite{gross1999hypercontractivity} for more details).
\end{remark}

In the rest of this section, we describe the formula for $\partial^{\ast}$ on $(p,0)$-forms with $p\geqslant 2$. To illustrate the calculations, we start with $p = 2$. For $v \in A^2_{(2,0)}(\mathbb{B}, h, d\mu)$ we write 
\begin{equation}
v = \frac{1}{2}\sum_{j,k} v_{jk} dz^j \wedge dz^k = \sum_{j<k} v_{jk} dz^j \wedge dz^k,
\end{equation}
where $v_{jk} = - v_{kj}$ are holomorphic. If $u = u_{j} dz^j$, we have
\begin{equation}
\partial u
=
\frac{1}{2}\sum_{j,k} \left(\frac{\partial u_k}{\partial z^j} - \frac{\partial u_{j}}{\partial z^k}\right) dz^j \wedge dz^k.
\end{equation}
Moreover, since $v_{pq} = - v_{qp}$, we find that
\begin{align}
\left\langle \partial u, v\right\rangle_h
=
\sum_{j,k,p,q} \overline{v_{pq}}h^{k\bar{p}} h^{j\bar{q}} \left(\frac{\partial u_k}{\partial z^j}\right).
\end{align}
Assuming $(M,h)$ is complete, we use the usual cut-off function technique as in the proof of \cref{prop:31} and apply the integration by parts without boundary terms. The calculations can be done in a local coordinate patch as follows: 
\begin{align}
\left(\partial u, v \right)_{h,\psi}
& =
\int_M \sum_{j,k,p,q} \overline{v_{pq}} \left(\frac{\partial u_k}{\partial z^j}\right) \left(h^{k\bar{p}} h^{j\bar{q}}\right) e^{-\varphi} d\lambda \notag \\
& = - \int_M \sum_{j,k,p,q} \overline{v_{pq}}\biggl(u_k\, \partial_j \left(e^{-\varphi} h^{k\bar{p}} h^{j\bar{q}}\right)\biggr) d\lambda.
\end{align}
Expanding the right hand side in local coordinates using \cref{e:225,e:227}, we obtain
\begin{equation}\label{e:243}
\partial^{\ast} v
=
P_{h,\psi}\left(-(\psi_{\bar{j}} - \tau_{\bar{j}}) v_{pq} h^{q\bar{j}} dz^p + \frac{1}{2} T^{\bar{i}}_{\bar{j}\bar{k}} h^{r\bar{j}} h^{s\bar{k}} v_{rs} h_{p\bar{i}} dz^p \right) .
\end{equation}
Here, $P_{h,\psi}$ is the orthogonal projection from $L^2_{(2,0)}(M, h, e^{-\psi})$ onto $A^2_{(2,0)}(M, h, e^{-\psi})$.

Observe that the participation of the torsion is rather involved in the case $p = 2$. For general $p \geqslant 2$, we follow \cite{griffiths}, which is somewhat explicit. Precisely, if $\eta \in A^2_{(p,0)}(M, h, e^{-\psi})$ is written as $\eta = \frac{1}{p!}\sum_{|I| = p} \eta_I dz^I$, we define, for $p \geqslant 1$, 
\[
T \colon A^2_{(p,0)}(M, h, e^{-\psi}) \to A^2_{(p+1,0)}(M, h, e^{-\psi})
\]
by \cite[(6.9)]{griffiths}
\begin{equation}\label{e251}
T(\eta)
=
\frac{1}{(p-1)!} \sum T^{i}_{jk} \eta_{i i_2 \cdots i_p} dz^j \wedge dz^k \wedge z^{i_2} \wedge \cdots \wedge dz^{i_p}
\end{equation}
and, for $p\geqslant 2$, the ``adjoint'' $T^{\sharp} \colon A^2_{(p,0)}(M, h, e^{-\psi}) \to A^2_{(p-1,0)}(M, h, e^{-\psi})$ by
\begin{equation}
\langle T^{\sharp} \eta , \xi \rangle_h
=
\langle \eta , T \xi\rangle_h.
\end{equation}
If $\eta \in \dom(D^{\ast}) \cap A^2_{(p,0)}(M, h, e^{-\psi})$, then by \cite{griffiths}, and the fact that $\eta$ is holomorphic,
\begin{equation}\label{e:tstar}
D^{\ast} \eta
=
\frac{(-1)^{p-1}}{(p-1)!}h^{j\bar{k}}\psi_{\bar{k}} \eta_{i_1\cdots i_{p-1} j} dz^{i_1} \wedge \cdots \wedge dz^{i_{p-1}} - T^{\sharp}(\eta).
\end{equation}
More generally, if $\eta \in A^2_{(p,0)}$ such that the right hand side of \cref{e:tstar} belongs to $L^2$, then $\eta$ belongs to $ \dom(D^{\ast})$ and hence in $\dom(\partial^{\ast})$. Moreover,
\begin{equation}\label{e254a}
\partial^{\ast} \eta
=
P_{h,\psi}\left(\frac{(-1)^{p-1}}{(p-1)!} h^{j\bar{k}}\psi_{\bar{k}} \eta_{i_1\cdots i_{p-1} j} dz^{i_1} \wedge \cdots \wedge dz^{i_{p-1}} - T^{\sharp}(\eta)\right).
\end{equation}
This generalizes the formula in \cref{prop:31} (cf. \cite[Eq. (3.3)]{haslinger}). The proof uses a calculation as in \cite{griffiths} and a density lemma by Andreotti--Vesentini \cite[Lemma 4]{andreotti1965carleman}. We omit the details.
\begin{theorem}\label{thm:2.10}
	Let $(M,h)$ be a complete Kähler manifold and let $e^{-\psi}$ be a weight on~$M$. Assume that $\partial_p$ is densely defined in the Bergman space $A^2_{(p,0)}(M,h,e^{-\psi})$. If $(\bar{\partial} \psi)^{\sharp}$ is holomorphic, then 
	\begin{equation}\label{e:2.47}
	D^{\ast} \eta = \partial^{\ast}\eta
	\end{equation} 
	for all $\eta \in \dom(D^{\ast}) \cap A^2_{(p + 1,0)}(M,h,e^{-\psi})$.
\end{theorem}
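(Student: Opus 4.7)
The strategy generalizes \cref{cor:33} from $(1,0)$-forms to arbitrary $(p+1,0)$-forms, combining the explicit local formulas \cref{e:tstar}--\cref{e254a} with the simplifications afforded by the Kähler hypothesis. Since $(M,h)$ is Kähler, the torsion tensor $T^i_{jk}$ vanishes identically, so the operator $T^{\sharp}$ appearing in \cref{e:tstar} and \cref{e254a} is identically zero. Consequently, for $\eta \in \dom(D^{\ast}) \cap A^2_{(p+1,0)}(M,h,e^{-\psi})$, these two formulas (applied with the index shift appropriate to a $(p+1,0)$-form) reduce to
\begin{equation*}
D^{\ast}\eta
=
\frac{(-1)^p}{p!}\, h^{j\bar{k}}\psi_{\bar{k}}\, \eta_{i_1\cdots i_p j}\, dz^{i_1}\wedge\cdots\wedge dz^{i_p},
\qquad
\partial^{\ast}\eta
=
P_{h,\psi}(D^{\ast}\eta).
\end{equation*}
The theorem therefore reduces to showing that $D^{\ast}\eta$ is holomorphic, so that the Bergman projection acts on it as the identity.

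I would prove this holomorphicity in any local holomorphic chart. Fixing an increasing multi-index $(i_1,\dots,i_p)$, the corresponding coefficient of $D^{\ast}\eta$ is, up to a sign and combinatorial factor, $\sum_j \bigl(\sum_k h^{j\bar{k}}\psi_{\bar{k}}\bigr)\,\eta_{i_1\cdots i_p j}$. The hypothesis that $(\bar{\partial}\psi)^{\sharp} = \bigl(\sum_k h^{j\bar{k}}\psi_{\bar{k}}\bigr)\,\partial/\partial z^j$ is a holomorphic $(1,0)$-vector field says exactly that, in every such chart, the inner sum $\sum_k h^{j\bar{k}}\psi_{\bar{k}}$ is holomorphic for each $j$. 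Since $\eta \in A^2_{(p+1,0)}(M,h,e^{-\psi})$ is a holomorphic form, its components $\eta_{i_1\cdots i_p j}$ are likewise holomorphic, so each coefficient of $D^{\ast}\eta$ is a finite sum of products of holomorphic functions, hence holomorphic.

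Combining these observations, $D^{\ast}\eta$ lies in $L^2_{(p,0)}(M,h,e^{-\psi})$ (because $\eta \in \dom(D^{\ast})$) and has holomorphic coefficients in every chart, hence $D^{\ast}\eta \in A^2_{(p,0)}(M,h,e^{-\psi})$. The Bergman projection therefore fixes it, yielding $\partial^{\ast}\eta = P_{h,\psi}(D^{\ast}\eta) = D^{\ast}\eta$. The main obstacle is not conceptual but bookkeeping: one must track the antisymmetrization and sign conventions of \cref{e:tstar}--\cref{e254a} carefully, and the formulas themselves rest on a Griffiths-style integration by parts together with the Andreotti--Vesentini density lemma (which is where the completeness of $(M,h)$ enters). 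Once those ingredients are in place, the Kähler assumption collapses the torsion contribution and the assumed holomorphicity of $(\bar{\partial}\psi)^{\sharp}$ propagates into the holomorphicity of $D^{\ast}\eta$, closing the argument.
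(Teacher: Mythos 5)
Your proposal is correct and follows essentially the same route as the paper: Theorem 2.10 is stated there as an immediate consequence of the formulas \cref{e:tstar} and \cref{e254a}, whose derivation (Griffiths-style integration by parts plus the Andreotti--Vesentini density lemma) the authors only sketch. Your argument --- Kählerianity kills $T^{\sharp}$, the holomorphicity of $(\bar{\partial}\psi)^{\sharp}$ together with that of $\eta$ makes $D^{\ast}\eta$ holomorphic, and the Bergman projection then acts as the identity --- is exactly the intended justification.
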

One can state a version of this corollary for general Hermitian non-Kähler manifold. However, when $p >1$, the hypothesis is more technical due to the presence of the term $T^{\sharp}$ in \cref{e254a} above. In Section 5, we exhibit an example of a non-K\"ahler metric such that \cref{e:2.47} holds for $p=1$ and $p=2$.

\section{The complex Laplacian and the basic estimate}
In this section, we study the complex Laplacian associated to the $\partial$-operator restricted to the Bergman spaces. Let $(M,h)$ be a Hermitian manifold and let $e^{-\psi}$ be a weight on $M$. Suppose that $\dom(\partial)$ is dense in $A^2_{(p,0)}(M,h,e^{-\psi})$ and $A^2_{(p-1,0)}(M,h,e^{-\psi})$. Then the Laplacian
\begin{equation}
\boxop_p = \partial \partial^{\ast} + \partial^{\ast} \partial
\end{equation}
is well-defined (for $p = 0$, we define $\boxop_0 = \partial^{\ast} \partial$). This operator was studied earlier in \cite{haslinger} for the case $M= \bC^n$, $h$ is the {Euclidean} metric, and $\psi = |z|^2$. 
Under the density assumption, $\boxop$ acts as an (bounded or unbounded) self-adjoint operator on the Bergman space $ A^2_{(p,0)}(M, h, e^{-\psi})$. We point out that since the compactly supported forms can not have holomorphic coefficients, they are not useful for several problems considered here such as the density of $\dom(\partial)$. In particular, it is a nontrivial question whether $\partial$ is densely defined on the weighted Bergman spaces. Fortunately, in several interesting cases when $M$ is an open subset of $\mathbb{C}^n$, we can use $(p,0)$-forms with polynomial coefficients as a substitute to prove that $\partial$ is densely defined.
\begin{definition}[Basic estimate]
	Let $(M,h,\psi)$ be a Hermitian manifold with smooth weight $e^{-\psi}$ such that the $\partial$-operator is densely defined in $A^2_{(p,0)}(M,h,e^{-\psi})$. We say that the $\partial$-complex satisfy the \emph{basic estimate} on holomorphic $(p,0)$-forms if for each $u \in \dom(\partial_p) \cap \dom(\partial^{\ast}_p)$, we have
	\begin{equation}\label{e:basicestimate}
	\|\partial u\|^2_{h,\psi} + \|\partial^{\ast} u\|^2_{h,\psi} \geqslant c\, \|u\|^2_{h,\psi}
	\end{equation}
	for some constant $c >0$.
\end{definition}
Similarly to the $L^2$-theory for the $\bar{\partial}$-complex, the basic estimate \cref{e:basicestimate} implies various useful properties for the complex Laplacian $\boxop$ (cf. Chapter 8 of \cite{haslinger/book}).

In the following, we describe a simple situation in which the basic estimate for $\partial$-complex holds. For this purpose, we first let $\Theta$ be the Chern-Ricci form of $h$, i.e., $\Theta = -i \partial \bar{\partial} \log \det (h_{j\bar{k}})$ in local coordinate system (see \cite{kodaira--morrow}). For a $(1,0)$-form $u = u_j dz^j$, we define
\begin{equation} 
T u = T^{i}_{jk} u_i\, dz^j \otimes dz^k.
\end{equation} 
The following version of the well-known basic identity generalizes \cite[Theorem~3.2]{haslinger} and has a similar form to \cite[Proposition~5.2]{berger--dallara--son}, cf. \cite{griffiths}. As in the $L^2$-theory of the $\bar{\partial}$, this identity can be used to proved the basic estimate in several situations.
\begin{proposition}[Basic identity] Let $(M,h)$ be a complete Hermitian manifold. Suppose that $\dom(\partial)$ is dense in $A^2_{(1,0)}(M,h,e^{-\psi})$. If $u$ is a $(1,0)$-form in $\dom(\partial^{\ast})$ such that $ \langle u, \partial \psi - \tau \rangle_h \in L^2 (M,h,e^{-\psi})$ and $\partial u - T u \in L^2_{(1,0)}(M,h,e^{-\psi})$, then
	\begin{equation}\label{e:basic}
	\|\partial u - T u\|^2 + \| \partial^\ast u\| ^2
	=
	2 \|\nabla u\|^2 + ( i\partial \bar{\partial} \psi + \Theta, u \wedge \bar{u} )_{h,\psi} -\|(I-P_{h,\psi})( \langle u, \partial \psi - \tau \rangle_h)\|^2 .
	\end{equation}
\end{proposition}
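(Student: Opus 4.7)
The plan is to reduce the asserted identity to a classical Bochner-Kodaira-Nakano identity on the complete Hermitian manifold $(M,h)$, and then recover the extra term $\|(I-P_{h,\psi})(\langle u,\partial\psi-\tau\rangle_h)\|^2$ from the obstruction between the Bergman adjoint $\partial^\ast$ and the $L^2$ adjoint $D^\ast$.

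First, I would apply \cref{prop:31}. Since $u\in A^2_{(1,0)}(M,h,e^{-\psi})$ and $\langle u,\partial\psi-\tau\rangle_h\in L^2(M,h,e^{-\psi})$, the proposition (and the cutoff calculation in its proof) yields $u\in \dom(D^\ast)$ with
\begin{equation*}
D^\ast u = \langle u,\partial\psi-\tau\rangle_h,\qquad \partial^\ast u = P_{h,\psi}(D^\ast u).
\end{equation*}
Since $P_{h,\psi}$ is the orthogonal projection onto $A^2_{(0,0)}$, Pythagoras gives
\begin{equation*}
\|\partial^\ast u\|^2 \;=\; \|D^\ast u\|^2 \;-\; \|(I-P_{h,\psi})(\langle u,\partial\psi-\tau\rangle_h)\|^2.
\end{equation*}
Substituting this into \cref{e:basic}, the identity reduces to the Bochner-Kodaira-Nakano type identity
\begin{equation*}
(\star)\qquad \|\partial u - Tu\|^2 + \|D^\ast u\|^2 \;=\; 2\|\nabla u\|^2 + (i\partial\bar\partial\psi+\Theta,\,u\wedge\bar u)_{h,\psi}.
\end{equation*}

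Next, I would establish $(\star)$ first for smooth $(1,0)$-forms of compact support. The key pointwise identity is obtained by antisymmetrizing the Chern covariant derivative: using $\nabla_j u_k=\partial_j u_k-\Gamma^l_{jk}u_l$ and $T^i_{jk}=\Gamma^i_{jk}-\Gamma^i_{kj}$, one checks
\begin{equation*}
\partial u - Tu \;=\; \tfrac{1}{2}(\nabla_j u_k-\nabla_k u_j)\,dz^j\wedge dz^k,
\end{equation*}
and hence $|\partial u - Tu|^2_h = |\nabla u|^2_h - h^{j\bar p}h^{k\bar q}(\nabla_j u_k)\,\overline{\nabla_q u_p}$. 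Combining this with the local expression \cref{e39} for $D^\ast u$, one writes $|D^\ast u|^2$ as a trace of $\nabla u$. Two integrations by parts of the mixed term against the measure $d\mu = e^{-\varphi}d\lambda$, using the commutator relation $[\nabla_j,\nabla_{\bar k}]$ for the Chern connection (which produces the Chern-Ricci form $\Theta$) together with the Hessian coming from differentiating $e^{-\psi}$ (which produces $i\partial\bar\partial\psi$), yields $(\star)$ with the torsion terms precisely absorbed by $Tu$; the relevant bookkeeping already appears in \cite[(6.9)--(6.20)]{griffiths} and in a directly analogous form in \cite[Prop.~5.2]{berger--dallara--son}, so I would follow that template rather than redo the calculation from scratch.

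Finally, to pass from compactly supported smooth forms to the $u$ of the statement, I would invoke the cutoff family $\chi_R$ used in the proof of \cref{prop:31}, with $|\partial\chi_R|_h\leqslant c/R$ and compact support: apply $(\star)$ to $\chi_R u$ and let $R\to\infty$. Completeness of $(M,h)$ makes $|\partial\chi_R|_h\to 0$ uniformly on $R$-dependent compacta, while the $L^2$ hypotheses on $\langle u,\partial\psi-\tau\rangle_h$ and $\partial u - Tu$ (and the pointwise boundedness of $\chi_R$) ensure that dominated convergence applies to each term, killing the error contributions supported on $\{1<\rho/R<2\}$. The main obstacle I expect is Step 2: carefully tracking how the torsion enters and exits the double integration by parts in a Hermitian (non-K\"ahler) setting so that exactly $Tu$ is absorbed on the left and exactly $\Theta$ survives on the right; once this is done correctly in the compactly supported case, the reduction in Step 1 and the cutoff extension in Step 3 are routine.
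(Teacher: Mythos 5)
Your proposal is correct and follows essentially the same route as the paper: apply \cref{prop:31} to identify $D^{\ast}u=\langle u,\partial\psi-\tau\rangle_h$ and $\partial^{\ast}u=P_{h,\psi}(D^{\ast}u)$, use Pythagoras to produce the $\|(I-P_{h,\psi})(\cdot)\|^2$ correction, and reduce to the known Bochner--Kodaira--Nakano-type identity from \cite{griffiths} and \cite{berger--dallara--son}. The only difference is that you sketch the proof of that identity (compactly supported case plus cutoff via completeness) where the paper simply cites it, after noting the complex conjugation needed to pass between the $\partial$- and $\bar{\partial}$-complexes.
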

\begin{proof} By \cref{prop:31}, $u\in \dom(D^{\ast})$ and $D^{\ast} u = \langle u , \partial \psi -\tau \rangle_h$. Thus,
	\begin{equation} 
	\partial^{\ast} u = P_{h,\psi} \left(D^{\ast} u\right) = P_{h,\psi}\left(\langle u , \partial \psi - \tau\rangle _h\right).
	\end{equation} 
	Consequently,
	\begin{align}
	\|\partial u - T u\|^2 + \| \partial^\ast u\| ^2
	=
	\|\partial u - T u\|^2 + \| D^\ast u\| ^2 - \|(I - P_{h,\psi}(D^{\ast}u))\|,
	\end{align}
	and therefore, \cref{e:basic} follows, after complex conjugation, from the  well-known identity for the $\bar{\partial}$-complex (see, e.g., \cite{berger--dallara--son} or \cite{griffiths}).
\end{proof}
If $(M,h)$ is not a complete manifold but a relatively compact domain in a complex manifold with smooth boundary, we can still formulate a similar basic identity with boundary term. We shall not use such an identity and hence omit the details.

Next, we define the torsion $(1,1)$ form as follows (cf. \cite{berger--dallara--son}).
\begin{equation} 
T \circ \overline{T}
:=
h^{a\bar{\ell}}h^{b\bar{m}}h_{\bar{q} j} h_{p\bar{k}}\, T^{p}_{a b}\, \overline{T^{q}_{\ell m}}\, dz^j\wedge dz^{\bar{k}}.
\end{equation} 
Observe that $\langle i T \circ \overline{T} , \overline u \wedge u\rangle_h = |Tu|^2$.
\begin{corollary}\label{cor:coercive}
	Let $(M,h)$ be a complete Hermitian manifold and $e^{-\psi}$ a smooth weight on~$M$. Suppose that the
	following conditions hold.
	\begin{enumerate}
		\renewcommand{\labelenumi}{(\roman{enumi})}
		\item $\dom(\boxop)$ is dense in $A^2_{(1,0)}(M,h,e^{-\psi})$,
		\item $\partial \psi - \tau \in L^2_{(1,0)}(M,h,e^{-\psi})$ and $(\bar{\partial} \psi - \bar{\tau})^{\sharp}$ is holomorphic,
		\item on $M$,
		\begin{equation}\label{e:curvature}
		i\partial\bar{\partial} \psi + \Theta - i \left(\frac{\sigma}{\sigma - 1}\right) T\circ \bar{T} \geqslant \sigma b^2 \omega_h,
		\  \text{for some}\ b> 0\ \text{and} \ \sigma >1.
		\end{equation}	
	\end{enumerate}
	If $Tu \in L^2_{(1,0)}(M,h,e^{-\psi})$, then 
	\begin{equation}\label{e:coercive}
	\|\partial u \|^2 + \| \partial^\ast u\| ^2 \geqslant b^2 \|u\|^2.
	\end{equation}
\end{corollary}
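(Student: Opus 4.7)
The plan is to combine the basic identity \cref{e:basic} with the curvature hypothesis (iii), using assumption (ii) to kill the projection term on the right-hand side of the identity and then using Young's inequality to trade $\|\partial u - Tu\|^2$ for $\|\partial u\|^2$ at the price of a $\|Tu\|^2$ correction that the curvature term will absorb.

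I would begin by checking that the hypotheses of the basic identity hold for our $u$. Since $u\in\dom(\partial)\cap\dom(\partial^{\ast})$ and $Tu\in L^2$ by hypothesis, both $\partial u - Tu$ and $\partial^{\ast} u$ lie in $L^2$. Condition (ii) together with \cref{cor:33} yields $\partial^{\ast} u = D^{\ast} u = \langle u,\partial\psi-\tau\rangle_h$; because $u$ and $(\bar{\partial}\psi-\bar{\tau})^{\sharp}$ are both holomorphic, this pairing is itself holomorphic, so the projection error $(I-P_{h,\psi})(\langle u,\partial\psi-\tau\rangle_h)$ vanishes and \cref{e:basic} simplifies to
\begin{equation*}
\|\partial u - Tu\|^2 + \|\partial^{\ast} u\|^2 = 2\|\nabla u\|^2 + \bigl(i\partial\bar{\partial}\psi + \Theta,\, u\wedge\bar{u}\bigr)_{h,\psi}.
\end{equation*}

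Next, I would unpack (iii). Pairing the pointwise inequality with $u\wedge\bar{u}$, integrating against $e^{-\psi}\dvol_h$, and invoking the normalization $(\omega_h, u\wedge\bar{u})_{h,\psi}=\|u\|^2$ together with the identity $\langle iT\circ\bar{T},\bar{u}\wedge u\rangle_h = |Tu|^2$ recorded just before the statement, yields
\begin{equation*}
\bigl(i\partial\bar{\partial}\psi + \Theta,\, u\wedge\bar{u}\bigr)_{h,\psi} \geqslant \frac{\sigma}{\sigma-1}\|Tu\|^2 + \sigma b^2\|u\|^2.
\end{equation*}
Dropping the nonnegative gradient term then gives
\begin{equation*}
\|\partial u - Tu\|^2 + \|\partial^{\ast} u\|^2 \geqslant \frac{\sigma}{\sigma-1}\|Tu\|^2 + \sigma b^2\|u\|^2.
\end{equation*}

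The final step is Young's inequality in the sharp form $\|a-b\|^2\leqslant(1+t)\|a\|^2+(1+1/t)\|b\|^2$, applied with the tuned choice $t=\sigma-1$ so that $1+t=\sigma$ and $1+1/t=\sigma/(\sigma-1)$. This gives $\|\partial u - Tu\|^2 \leqslant \sigma\|\partial u\|^2 + \frac{\sigma}{\sigma-1}\|Tu\|^2$; substituting and cancelling the two $\frac{\sigma}{\sigma-1}\|Tu\|^2$ terms leaves $\sigma\|\partial u\|^2 + \|\partial^{\ast} u\|^2 \geqslant \sigma b^2\|u\|^2$. Since $\sigma>1$, the left-hand side is bounded above by $\sigma(\|\partial u\|^2+\|\partial^{\ast} u\|^2)$, and dividing by $\sigma$ delivers \cref{e:coercive}. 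There is no real obstacle: the only substantive point is the bookkeeping of constants, and the coefficient $\sigma/(\sigma-1)$ appearing in (iii) is precisely engineered so that the $\|Tu\|^2$ contributions cancel exactly, which explains that otherwise peculiar factor.
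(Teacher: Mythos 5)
Your proposal is correct and follows exactly the route the paper takes: kill the projection term using (ii), reduce the basic identity, pair the curvature hypothesis (iii) with $u\wedge\bar u$, and absorb the $\|Tu\|^2$ correction from $\|\partial u - Tu\|^2$ via Young's inequality with $t=\sigma-1$. The paper merely compresses this last absorption step into a citation of \cite{berger--dallara--son} (``we omit the details''), so your write-up supplies precisely the omitted computation, with the constants handled correctly.
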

\begin{proof}
	If $(\bar{\partial} \psi - \bar{\tau})^{\sharp}$ is holomorphic, then $\langle u, \partial\psi -\tau \rangle$ is holomorphic. Thus,
	\begin{equation}
	(I - P_{h, \psi})(\langle u , \partial \psi - \tau \rangle) =0.
	\end{equation}
	The basic identity reduces to
	\begin{equation}
	\|\partial u - T u\|^2 + \|\partial^{\ast} u\|^2 = 2\|\nabla u\|^2 + \left(i\partial\bar{\partial} \psi + \Theta, u \wedge \bar{u}\right)_{h,\psi}.
	\end{equation}
	From this, we can argue similarly to \cite{berger--dallara--son} to obtain \cref{e:coercive}. We omit the details.
\end{proof}
We conclude this section by pointing out that although the basic identity is very useful to establish the basic estimate for the Laplacian associated to a $\bar{\partial}$-complex, the condition \cref{e:curvature} in \cref{cor:coercive} seems to be rather strong (compared to the situation for $L^2$-complex) for the $\partial$-complex in Bergman spaces since we only require \cref{e:basicestimate} to hold for holomorphic $(p,0)$-forms. In \cref{sec:unitball}, we shall meet two situations in which \cref{e:curvature} either fails or holds with a non-optimal constant.

\section{The $\partial$-Neumann operator on weighted Bergman spaces}
In this section, we assume that for our $\partial$-complex, the basic estimate $\cref{e:basicestimate}$ holds. \cref{cor:coercive} in the last section provides a concrete condition for this to be true, however, there are several situations in which the basic estimate can be proved directly (see \cref{sec:unitball}). In these situations, it is natural to study
the bounded inverse $\widetilde{N} : = \boxop^{-1}$. More precisely, 
\begin{proposition}
	Suppose that $\dom(\partial_p)$ is dense in $A_{(p,0)}(M,h,e^{-\psi})$ for $p=0,1$ and suppose that the basic estimate \cref{e:basicestimate} holds. Then $\partial$ and $\partial^{\ast}$ have closed ranges. If we endow $\dom(\partial) \cap \dom(\partial^{\ast})$ with the graph norm 
	\begin{equation}
	f \mapsto \left(\|\partial f\|^2 + \|\partial^{\ast} f\|^2 \right)^{\frac{1}{2}}
	\end{equation}
	then the subspace $\dom(\partial) \cap \dom(\partial^{\ast})$ becomes a Hilbert space.
\end{proposition}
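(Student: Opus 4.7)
The plan is to establish the two conclusions in sequence, namely (i) the closed range of the operators in the complex and (ii) completeness of $\dom(\partial)\cap\dom(\partial^{\ast})$ in the graph norm. For (i) I would use the standard functional-analytic criterion that a closed densely defined operator $T$ has closed range iff $\|u\|\leqslant C\|Tu\|$ on $\dom(T)\cap\ker(T)^{\perp}$, combined with the complex property $\partial^2=0$ (and its adjoint counterpart). The basic estimate $\|\partial u\|^2+\|\partial^{\ast}u\|^2\geqslant c\|u\|^2$ on $A^2_{(1,0)}$ then reduces, on each of the appropriate orthogonal complements, to controlling a single one of the two terms.

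More concretely, for $\partial_0^{\ast}\colon A^2_{(1,0)}\to A^2_{(0,0)}$ I would argue that $\ker(\partial_0^{\ast})^{\perp}=\overline{\image(\partial_0)}$, and that this closure sits inside $\ker(\partial_1)$: if $\partial_0 u_n\to v$ in $L^2$, then $\partial_1(\partial_0 u_n)=0$ by the complex property, and closedness of $\partial_1$ puts $v\in\ker(\partial_1)$. Hence every $u\in\dom(\partial_0^{\ast})\cap\ker(\partial_0^{\ast})^{\perp}$ already lies in $\dom(\partial_1)\cap\dom(\partial_0^{\ast})$ with $\partial_1 u=0$, so the basic estimate collapses to $\|\partial_0^{\ast}u\|^2\geqslant c\|u\|^2$, and $\partial_0^{\ast}$, hence $\partial_0$ by the closed range theorem, has closed range. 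For $\partial_1$ the argument is dual: I first check $\image(\partial_1^{\ast})\subset\ker(\partial_0^{\ast})$ by noting that for $v=\partial_1^{\ast}w$ and any $f\in\dom(\partial_0)$, the form $\partial_0 f$ automatically lies in $\dom(\partial_1)$ (since $\partial_1\partial_0 f=0$ vanishes pointwise, hence is trivially in $L^2$), so $\langle\partial_0 f, v\rangle=\langle\partial_1\partial_0 f, w\rangle=0$. Taking closures gives $\ker(\partial_1)^{\perp}=\overline{\image(\partial_1^{\ast})}\subset\ker(\partial_0^{\ast})$, and the basic estimate on $\dom(\partial_1)\cap\ker(\partial_1)^{\perp}$ now yields $\|\partial_1 u\|^2\geqslant c\|u\|^2$, i.e.\ closed range of $\partial_1$ and of $\partial_1^{\ast}$.

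For (ii), the basic estimate immediately shows that $f\mapsto(\|\partial f\|^2+\|\partial^{\ast}f\|^2)^{1/2}$ is a genuine norm, equivalent to the usual graph norm $f\mapsto(\|f\|^2+\|\partial f\|^2+\|\partial^{\ast}f\|^2)^{1/2}$. Completeness is then routine: a Cauchy sequence $\{f_n\}$ in this norm must be Cauchy in $L^2$ (by the basic estimate) and in the target $L^2$-spaces of $\partial f_n$ and $\partial^{\ast}f_n$; the closedness of $\partial$ and $\partial^{\ast}$ (already established earlier in Section~2) identifies the respective limits as $\partial f$ and $\partial^{\ast}f$ with $f\in\dom(\partial)\cap\dom(\partial^{\ast})$, and the inner product structure is inherited from the target spaces.

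I do not anticipate a serious obstacle: the entire proposition is functional-analytic bookkeeping built out of the complex property, the basic estimate, and earlier closedness results. The only mildly delicate point is the auxiliary inclusion $\overline{\image(\partial_0)}\subset\ker(\partial_1)$ (and its adjoint counterpart $\overline{\image(\partial_1^{\ast})}\subset\ker(\partial_0^{\ast})$), whose verification hinges on the observation that $\partial^2$ vanishes pointwise on smooth forms, so the relevant second-order terms belong to $L^2$ for free and one may chain the closed densely defined operators without a domain mismatch.
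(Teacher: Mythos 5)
Your argument is correct and follows essentially the same route as the paper's proof: both reduce the closed-range claims to the coercivity obtained by restricting the basic estimate to the orthogonal complement of the relevant kernel, using the complex property $\partial^2=0$ (with the same domain observation that $\partial_1\partial_0 f=0$ lies trivially in $L^2$) to place that complement inside $\ker\partial_1$ resp.\ $\ker\partial_0^{\ast}$. The paper only writes out the $(1,0)$-level estimate $\|u\|\leqslant b^{-1}\|\partial u\|$ on $\dom(\partial)\cap(\ker\partial)^{\perp}$ and appeals to general Hilbert-space results for the remaining assertions, so your version is simply a more explicit rendering of the same proof.
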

\begin{proof}
	As usual $\ker \partial= (\image \partial^*)^\perp$. Therefore,
	\begin{equation}
	(\ker \partial )^\perp = \overline{ \image \partial^{\ast}} \subseteq \ker \partial^{\ast}.
	\end{equation}
	If $u\in \ker \partial \cap \ker \partial^{\ast},$ we have by \cref{e:basicestimate} that $u=0$. Hence
	\begin{equation}\label{eq: ima1}
	(\ker \partial)^\perp = \ker \partial^{\ast}.
	\end{equation}
	If $u\in \dom (\partial) \cap (\ker \partial )^\perp,$ then $u\in \ker \partial^*,$ and \cref{e:basicestimate} also implies
	\begin{equation}
	\|u\| \leqslant \frac{1}{b} \, \| \partial u \|.
	\end{equation}
	To conclude the proof, we can use general results of unbounded operators on Hilbert spaces (see for instance \cite[Chapter~4]{haslinger/book}) to show that $\image\partial $ and $\image \partial^{\ast}$ are closed. The last assertion follows again by \cref{e:basicestimate}.
\end{proof}
\begin{theorem}\label{thm:sur} 
	Suppose that $\dom(\partial_p)$ is dense in $A_{(p,0)}(M,h,e^{-\psi})$ for $p=0,1$ and suppose that the basic estimate \cref{e:basicestimate} holds. Then the operator $\boxop \colon \dom(\boxop) \rightarrow A^2_{(1,0)}(M,h,e^{-\psi})$ is bijective and has a bounded inverse
	\begin{equation}
	\widetilde{N} \colon A^2_{(1,0)}(M,h,e^{-\psi}) \to \dom(\boxop).
	\end{equation}
	In addition 
	\begin{equation}\label{cont5}
	\|\widetilde{N} u\| \leqslant \frac{1}{b}\, \|u\|,
	\end{equation}
	for each $u\in A^2_{(1,0)}(M,h,e^{-\psi})$.
\end{theorem}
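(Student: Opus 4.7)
The plan is to invert $\boxop$ via the Friedrichs form method. Consider the positive sesquilinear form
\[
Q(u,v) := (\partial u, \partial v)_{h,\psi} + (\partial^{\ast} u, \partial^{\ast} v)_{h,\psi}
\]
on the domain $\mathcal{H} := \dom(\partial) \cap \dom(\partial^{\ast})$. By the previous proposition, $\mathcal{H}$ equipped with the graph norm is a Hilbert space, and the basic estimate \cref{e:basicestimate} implies that $\sqrt{Q(\cdot,\cdot)}$ is equivalent to the graph norm; hence $(\mathcal{H}, Q)$ is itself a Hilbert space.

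Fix $\alpha \in A^2_{(1,0)}(M,h,e^{-\psi})$. By Cauchy--Schwarz together with the basic estimate, the functional $v \mapsto (\alpha, v)_{h,\psi}$ is continuous on $(\mathcal{H}, Q)$, so the Riesz representation theorem supplies a unique $u \in \mathcal{H}$ satisfying
\[
Q(u, v) = (\alpha, v)_{h,\psi}, \qquad \forall\, v \in \mathcal{H}.
\]

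The main step, and the one I expect to be the principal obstacle, is the regularity upgrade $u \in \dom(\boxop)$. The key tool is the orthogonal decomposition $A^2_{(j,0)} = \ker \partial \oplus \ker \partial^{\ast}$, valid in the relevant degrees by the previous proposition (via closedness of ranges). Given $w \in \dom(\partial)$, decompose $w = w_1 + w_2$ with $w_1 \in \ker \partial$ and $w_2 \in \ker \partial^{\ast} \cap \dom(\partial) \subset \mathcal{H}$; since $\partial w = \partial w_2$ and $\|w_2\| \leqslant \|w\|$, substituting $v = w_2$ into the variational equation gives
\[
|(\partial u, \partial w)_{h,\psi}| = |(\alpha, w_2)_{h,\psi}| \leqslant \|\alpha\|\,\|w\|.
\]
Thus $\partial u \in \dom(\partial^{\ast})$ by the definition of the Hilbert-space adjoint. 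A symmetric argument---decomposing $w \in \dom(\partial^{\ast})$ and testing with elements of $\ker \partial \cap \dom(\partial^{\ast}) \subset \mathcal{H}$---places $\partial^{\ast} u$ in $\dom(\partial)$. Reinserting the identities $(\partial u, \partial v)_{h,\psi} = (\partial^{\ast}\partial u, v)_{h,\psi}$ and $(\partial^{\ast} u, \partial^{\ast} v)_{h,\psi} = (\partial\partial^{\ast} u, v)_{h,\psi}$ into the variational equation yields $(\boxop u - \alpha, v)_{h,\psi} = 0$ for all $v \in \mathcal{H}$, and the density of $\mathcal{H}$ in $A^2_{(1,0)}(M,h,e^{-\psi})$ then forces $\boxop u = \alpha$.

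Injectivity of $\boxop$ is immediate from the basic estimate: $\boxop u = 0$ implies $Q(u,u) = (\boxop u, u)_{h,\psi} = 0$ and hence $u = 0$. Together with the surjectivity established above, $\widetilde{N} := \boxop^{-1}$ is well-defined on all of $A^2_{(1,0)}(M,h,e^{-\psi})$. The operator-norm bound \cref{cont5} follows by pairing $\boxop u = \alpha$ with $u$, yielding $\|\partial u\|^2 + \|\partial^{\ast} u\|^2 = (\alpha, u)_{h,\psi} \leqslant \|\alpha\|\,\|u\|$, and invoking \cref{e:basicestimate} once more to extract the norm of $u$ on the left-hand side.
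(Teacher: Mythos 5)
Your proof is correct, and it is essentially the standard form-method argument (Riesz representation on $\dom(\partial)\cap\dom(\partial^{\ast})$ with the graph norm, followed by the regularity upgrade via the decomposition $A^2_{(1,0)}=\ker\partial\oplus\ker\partial^{\ast}$) that the paper itself omits, deferring instead to the general theory in \cite[Chapter~4]{haslinger/book} and the Segal--Bargmann case in \cite{haslinger}. The only step you assert without argument, the density of $\dom(\partial)\cap\dom(\partial^{\ast})$ in $A^2_{(1,0)}(M,h,e^{-\psi})$, follows from the same orthogonal decomposition you already use, so there is no real gap.
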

\begin{theorem}\label{invers}
	Suppose that $\dom(\partial_p)$ is dense in $A^2_{(p,0)}(M,h,e^{-\psi})$ for $p=0,1$ and suppose that the basic estimate \cref{e:basicestimate} holds.
	Let $\eta \in A^2_{(1,0)}(M, h, e^{-\psi})$ with $\partial \eta =0.$ 
	Then $u_0: =\partial^{\ast} \widetilde{N}_1\eta$ is the canonical solution of $\partial u =\eta. $ This means $\partial u_0 =\eta $ and $u_0 \in (\ker \partial )^\perp$. Moreover, 
	\begin{equation}\label{cont6}
	\left\| \partial^{\ast} \widetilde{N}\eta \right\| \leqslant b^{-1/2} \, \| \eta \| . 
	\end{equation}
\end{theorem}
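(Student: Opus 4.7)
The plan is to exploit the identity $\partial^2 = 0$ (valid for the $\partial$-operator on $(p,0)$-forms since mixed partial derivatives commute) together with the closedness hypothesis $\partial \eta = 0$ to peel off the $\partial^{\ast}\partial$ part of $\boxop$ acting on $\widetilde{N}\eta$. By the preceding \cref{thm:sur}, $N:=\widetilde{N}\eta$ belongs to $\dom(\boxop)$ and satisfies $\boxop N = \partial\partial^{\ast}N + \partial^{\ast}\partial N = \eta$ with $\|N\|\leqslant b^{-1}\|\eta\|$. Applying $\partial$ to this equation and using $\partial\eta=0$ and $\partial^2=0$ gives $\partial\partial^{\ast}\partial N = 0$. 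Since $N\in\dom(\boxop)$ means in particular $\partial N\in \dom(\partial^{\ast})$ and $\partial^{\ast}\partial N\in \dom(\partial)$, the pairing
\begin{equation*}
0 = (\partial\partial^{\ast}\partial N,\, \partial N)_{h,\psi} = (\partial^{\ast}\partial N,\, \partial^{\ast}\partial N)_{h,\psi} = \|\partial^{\ast}\partial N\|^2
\end{equation*}
is legitimate and forces $\partial^{\ast}\partial N = 0$. Consequently $\partial\partial^{\ast} N = \eta$, so $u_0 = \partial^{\ast}N$ solves $\partial u_0 = \eta$.

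Next I would verify $u_0 \in (\ker\partial)^{\perp}$. This is automatic: by the first display in the proof of the preceding proposition, $(\ker\partial)^{\perp} = \overline{\image\,\partial^{\ast}}$ (indeed, equality holds without closure, but $\image\,\partial^{\ast}$ being closed by the basic estimate is enough), and $u_0 \in \image\,\partial^{\ast}$ by construction. Hence $u_0$ is orthogonal to every element of $\ker\partial$, which is precisely the characterization of the canonical solution.

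Finally, for the norm bound I would compute
\begin{equation*}
\|u_0\|^2 = (\partial^{\ast}N,\partial^{\ast}N)_{h,\psi} = (\partial\partial^{\ast}N,\,N)_{h,\psi} = (\eta,\,N)_{h,\psi} \leqslant \|\eta\|\,\|\widetilde{N}\eta\| \leqslant b^{-1}\|\eta\|^2,
\end{equation*}
where the second equality uses $N\in\dom(\partial^{\ast}\partial)\cap\dom(\partial\partial^{\ast})$ (so the adjoint pairing is valid), the third uses $\partial^{\ast}\partial N=0$ just established, and the last inequality comes from \cref{cont5}. Taking square roots yields \cref{cont6}.

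I do not expect any serious obstacle here; the proof is a standard manipulation from $L^2$-Hodge theory transposed to the $\partial$-setting. The one point that requires a little care is justifying the adjoint pairings in the two computations above, but these are covered by the fact that $N\in\dom(\boxop)$ and hence $\partial N\in\dom(\partial^{\ast})$ and $\partial^{\ast} N\in\dom(\partial)$, together with $\partial\partial^{\ast}N\in A^2_{(1,0)}$ (which follows since $\partial\partial^{\ast}N=\eta$). No completeness or curvature assumption beyond those already invoked for \cref{thm:sur} is needed.
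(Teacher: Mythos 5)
Your proof is correct and follows essentially the same (standard) route the paper relies on: the paper states \cref{invers} without proof, but the orthogonality argument and the norm computation $\|u_0\|^2=(\widetilde{N}\eta,\eta)_{h,\psi}\leqslant b^{-1}\|\eta\|^2$ are precisely the ``standard arguments'' it spells out later in the proofs of \cref{prop:51} and \cref{thm:5.3}. One small point of justification: the definition of $\dom(\boxop)$ gives $\partial^{\ast}N\in\dom(\partial)$ and $\partial N\in\dom(\partial^{\ast})$, not directly $\partial^{\ast}\partial N\in\dom(\partial)$ as you assert; the latter nevertheless holds because $\partial^{\ast}\partial N=\eta-\partial\partial^{\ast}N$ and both terms on the right lie in $\ker\partial_1$ (the second since $\partial^{\ast}N\in\dom(\partial_0)$ and $\partial^{2}=0$), which is also what legitimizes writing $\partial\partial^{\ast}\partial N=0$ in the first place.
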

\begin{example}\label{ex:44} Consider the complex plane $\mathbb{C}$ and a radial weight function 	$\psi(|z|^2)$, where $\psi(t)$ is a real-valued function of one real variable. Suppose that $\psi' > 0$ and put $h = \psi'(|z|^2) dz \otimes d\bar{z}$. Then $d\mu = e^{-\psi(|z|^2)} \dvol_h = e^{-\psi(|z|^2)} \psi'(|z|^2) d\lambda$. Clearly, $(\bar{\partial}\psi)^{\sharp} = z\partial_z$ is a holomorphic vector field. The special case $\psi(t) =t$ leads to the case of {Segal--Bargmann space} and was studied thoroughly in \cite{haslinger}. 
	
	For a constant $\alpha \geqslant 2$, we put $\psi(z) = \alpha \log (1+|z|^2)$ and consider the complete metric
	\begin{equation}
	h = (1+|z|^2)^{-1} dz \otimes d\bar{z}.
	\end{equation}
	(This metric is often referred to as Hamilton's cigar soliton in the literature). Put
	\begin{equation}
	e^{-\psi} \dvol_h = (1+|z|^2)^{-(\alpha+1)} d\lambda,
	\end{equation}
	where $d\lambda$ is the standard Lebesgue measure. The Bergman space $A^2(\mathbb{C}, e^{-\psi} \dvol_h)$ is the \textit{finite} dimensional space of polynomials of degree $\leqslant \alpha -1$.
	
	If $u(z) \in A^2(\mathbb{C}, e^{-\psi} \dvol_h)$, i.e., $u(z)$ is a polynomial of degree $k\leqslant \alpha -1$, then
	\begin{equation}
	|\partial u|^2_h
	=
	|u'(z) dz|^2_h
	=
	|u'(z)|^2 (1+|z|^2).
	\end{equation}
	Since $u'(z)$ is of degree $k-1$, we can easily see that then $\partial u \in A^2_{(1,0)}(\mathbb{C}, h, e^{-\psi})$. That is, $\partial$ maps $ A^2(\mathbb{C}, h, e^{-\psi})$ into $ A^2_{(1,0)}(\mathbb{C}, h, e^{-\psi})$. This is an operator between finite dimensional Hilbert spaces and hence bounded. Notice that $|\partial \psi|_h = \alpha |z|/\sqrt{1+|z|^2}$ is also bounded (cf. \cref{prop:31}).
	
	Let $D$ be the $\partial$-operator on the weighted $L^2$ spaces, then
	\begin{equation}
	D^{\ast} (udz)
	=
	- h^{-1}\partial_{\bar{z}} u 
	-u \partial_{\bar{z}} h^{-1}
	+ h^{-1} (\partial_{\bar{z}} \varphi) u
	=
	- (1+|z|^2) \partial_{\bar{z}} u + \alpha z u.
	\end{equation}
	Therefore, the weighted Bergman space adjoint is a ``multiplication'' operator:
	\begin{equation}
	\partial^{\ast}(udz)
	=
	\alpha zu,
	\end{equation}
	where $u$ is a holomorphic polynomial of degree $\leqslant \alpha -2$.
	
	The formula for $\boxop $ is rather simple. Indeed, for a polynomial $f \in A^2(\mathbb{C}, h, e^{-\psi})$, one has
	\begin{equation}
	\boxop _0 f = \alpha z f'(z).
	\end{equation}
	Clearly, $\ker \boxop_0$ is the space of constants. Moreover, $\alpha, 2\alpha,\dots , (\lfloor \alpha\rfloor-1)\alpha$ are the eigenvalues with corresponding eigenvectors $z, z^2,\dots, z^{\lfloor \alpha\rfloor-1}$, respectively.
	
	For a holomorphic $(1,0)$-form $udz$, one has
	\begin{equation}
	\boxop _1 (udz) = \alpha (u + zu') dz.
	\end{equation}
	Thus, $\ker\boxop_1 = \{0\}$. Moreover, the eigenvalues are $\alpha k$ for $k = 1,\dots, \lfloor \alpha\rfloor -1$. The basic identity gives $\boxop_1 \geqslant (\alpha + 1)/2$, as quadratic forms, but in fact we have a stronger inequality $\boxop_1 \geqslant \alpha$. Observe that $\boxop_1$, expected to be of second order, is actually of first order when restricted to 	holomorphic forms.
\end{example}

\section{Weighted Bergman spaces on the unit ball of $\mathbb{C}^n$}\label{sec:unitball}

In the sequel, we shall focus on the unit ball $\mathbb{B}^n \subset \mathbb{C}^n$, although the case of $\mathbb{C}^n$ is interesting as well; see \cite{haslinger} and several examples below. In the first part, we investigate $U(n)$-invariant K\"ahler metrics and radial weights. The most important example in this case is the complex hyperbolic metric with a radial weight making the duality between differentiation and multiplication similar to the case of the Segal-Bargmann space. In the second part of this section, we consider $U(n)$-invariant conformally flat Hermitian metrics and radial weights. We focus on a special case for which \cref{thm:2.10} still holds. This case leads to a proof of \cref{thm:1.1} stated in the introduction. In both parts, we shall use, as in \cite{haslinger}, the following result.
\begin{lemma}[see Lemma~1.2.2 of \cite{davies1995spectral}]\label{lem51}
	Let A be a symmetric operator on a Hilbert space $H$ with domain $\dom(A)$ and suppose that $\{x_k\}_k$ is a complete orthonormal system in $H$. If each $x_k$ lies in $\dom(A)$ and there exists $\lambda_k \in \mathbb{R}$ 
	such that $Ax_k = \lambda_k x_k$ for every $k \in \mathbb{N}$, then $A$ is essentially self-adjoint and the spectrum of $\overline A$ is the closure in $\mathbb{R}$ of the set of all $\lambda_k$.
\end{lemma}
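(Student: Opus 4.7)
The plan is to deduce both conclusions from the spectral theorem for bounded self-adjoint operators, accessed via the standard deficiency-index criterion. A symmetric operator $A$ is essentially self-adjoint if and only if its deficiency subspaces $\ker(A^{\ast} \mp iI)$ are both trivial, so the first order of business is to verify this vanishing using the hypothesis. The computation is short: if $y \in \dom(A^{\ast})$ satisfies $A^{\ast} y = iy$, then for every basis vector $x_k$ we have
\begin{equation*}
\lambda_k (x_k, y) = (Ax_k, y) = (x_k, A^{\ast} y) = -i (x_k, y),
\end{equation*}
so $(\lambda_k + i)(x_k, y) = 0$. Since $\lambda_k \in \mathbb{R}$, the factor $\lambda_k + i$ is nonzero, hence $(x_k, y) = 0$ for all $k$ and completeness of $\{x_k\}$ forces $y = 0$. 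The identical argument with $-i$ in place of $+i$ handles the other deficiency subspace. This yields essential self-adjointness of $A$; write $\bar A$ for its unique self-adjoint closure.

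Next I would settle the spectrum statement. The inclusion $\overline{\{\lambda_k\}} \subseteq \sigma(\bar A)$ is immediate because each $\lambda_k$ is an honest eigenvalue of $\bar A$ (with eigenvector $x_k \in \dom(A) \subseteq \dom(\bar A)$), and the spectrum is closed in $\mathbb{R}$. For the reverse inclusion, fix $\mu \in \mathbb{R}$ with $\delta := \inf_k |\lambda_k - \mu| > 0$, and construct the resolvent by spectral formula relative to the given basis:
\begin{equation*}
R_\mu x := \sum_{k} \frac{(x, x_k)}{\lambda_k - \mu}\, x_k, \qquad x \in H.
\end{equation*}
Bessel's inequality gives $\|R_\mu x\|^2 \leqslant \delta^{-2} \|x\|^2$, so $R_\mu$ is bounded with $\|R_\mu\| \leqslant 1/\delta$. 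A direct computation on finite linear combinations of the $x_k$ (which lie in $\dom(A) \subseteq \dom(\bar A)$ and are dense in $H$) shows $(A - \mu)R_\mu = I$ on that dense subspace and $R_\mu (A - \mu) = I$ on it as well.

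The last step is the step I expect to require the most care: promoting these identities to the full domain $\dom(\bar A)$ and thereby identifying $R_\mu$ with $(\bar A - \mu)^{-1}$. Here I would use that $\bar A$ is closed: for $x \in H$ arbitrary, approximate by truncated sums $x^{(N)} = \sum_{k \leqslant N} (x, x_k) x_k$, so that $R_\mu x^{(N)} \to R_\mu x$ and $(A - \mu) R_\mu x^{(N)} = x^{(N)} \to x$; closedness of $\bar A - \mu$ then gives $R_\mu x \in \dom(\bar A)$ and $(\bar A - \mu) R_\mu x = x$. A symmetric argument using a sequence in $\dom(A)$ approximating any $y \in \dom(\bar A)$ in the graph norm yields $R_\mu(\bar A - \mu) y = y$. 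Hence $\mu \in \rho(\bar A)$, which completes the proof.
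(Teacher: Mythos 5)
The paper does not actually prove this lemma: it is quoted verbatim, with attribution, from Lemma~1.2.2 of Davies's book, so there is no internal proof to compare yours against. Judged on its own, your argument is the standard one and is correct in outline: the deficiency-index computation showing $\ker(A^{\ast}\mp iI)=\{0\}$ is exactly right, the inclusion $\overline{\{\lambda_k\}}\subseteq\sigma(\overline{A})$ is immediate as you say, and the explicit resolvent $R_\mu$ together with closedness of $\overline{A}-\mu$ correctly yields surjectivity of $\overline{A}-\mu$ on all of $H$. (Davies's own proof instead identifies $\overline{A}$ with the diagonal operator $f\mapsto\sum_k\lambda_k(f,x_k)\,x_k$ on its maximal domain, i.e.\ conjugates everything to a multiplication operator on $\ell^2$ and reads off the spectrum there; your route through an explicitly constructed resolvent is an equally legitimate alternative.)

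The one step you should tighten is the final ``symmetric argument.'' You verified $R_\mu(A-\mu)=I$ only on finite linear combinations of the $x_k$, but the graph-norm approximating sequence for a general $y\in\dom(\overline{A})$ lives in $\dom(A)$, which may be strictly larger than that span; as written, the identity $R_\mu(A-\mu)y_n=y_n$ is not yet available for such $y_n$. Two one-line repairs: (a) for any $u\in\dom(A)$, symmetry and $Ax_k=\lambda_k x_k$ give $((A-\mu)u,x_k)=(u,(A-\mu)x_k)=(\lambda_k-\mu)(u,x_k)$, whence $R_\mu(A-\mu)u=\sum_k(u,x_k)x_k=u$ directly on all of $\dom(A)$; or (b) skip the left inverse entirely and note that once $\overline{A}-\mu$ is surjective, self-adjointness gives $\ker(\overline{A}-\mu)=\bigl(\mathrm{ran}(\overline{A}-\mu)\bigr)^{\perp}=\{0\}$, so $\overline{A}-\mu$ is bijective with bounded inverse $R_\mu$. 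Finally, your reverse inclusion treats only real $\mu$; you should say explicitly that non-real points lie in the resolvent set because $\overline{A}$ is self-adjoint, so that $\sigma(\overline{A})\subseteq\mathbb{R}$ and the argument for real $\mu$ suffices.
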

\subsection{$U(n)$-invariant K\"ahler metrics and radial weights}
In the sequel, we consider $U(n)$-invariant K\"ahler metrics and radial weights. To this end, we suppose that $h_{j\bar{k}}$ is a K\"ahlerian metric induced by a radial potential $h(z) = \tilde{h}(|z|^2)$, where $\tilde{h}(r)$ is a real-valued function of a real variable. Precisely, we have
\[
h_{j\overline{k}} = \partial_{j} \partial_{\overline{k}}\, \tilde{h}(|z|^2)
=
\tilde{h}'(|z|^2)\, \delta_{jk} + \tilde{h}''(|z|^2)\,  \overline{z}_j z_k.
\]
Thus, $h_{j\bar{k}}$ is a rank-one perturbation of a multiple of the identity matrix. For $h_{j\overline{k}}$ to be positive definite, we assume that $\tilde{h}'(r) > 0$ and $r\tilde{h}'' + \tilde{h}' > 0$. The Sherman-Morrison formula give the formula for the (transposed) inverse 
\[
	h^{k \overline{j}}
	=
	\frac{1}{\tilde{h}'} \left(\delta_{jk} - \frac{\tilde{h}'' z_k \overline{z}_j}{\tilde{h}' + r \tilde{h}''}\right), \quad r = |z|^2,
\]
so that $h_{\overline{l} k} h^{k\overline{j}} = \delta_{\overline{l}}^{\overline{j}}$, the Kronecker symbol.

If $\psi(z) = \tilde{\psi}(r)$, $r = |z|^2$, is a radial weight, then $\partial_{\overline{j}} \psi = \tilde{\psi}'(r) z_j$. Hence
\[
\left(\overline{\partial} \psi\right)^{\sharp} = \left(\frac{\tilde{\psi}'}{\tilde{h}' + r \tilde{h}''}\right) \sum_{k=1}^{n} z_k \frac{\partial}{\partial z_k}.
\]
The holomorphicity of $\left(\overline{\partial} \psi\right)^{\sharp}$ is equivalent to 
\[
\frac{\tilde{\psi}'}{\tilde{h}' + r \tilde{h}''} = C,
\]
for some constant $C$. This is equivalent to
\begin{equation}\label{e:last}
\tilde{\psi} = C r \tilde{h}' + \tilde{C},
\end{equation}
for some constants $C$ and $\tilde{C}$. From this equation, we can construct many examples of K\"ahlerian metrics which are invariant with respect to $U(n)$-action and radial weight for which \cref{thm:2.10} holds.

The most basic and interesting example that can be constructed as described above is that of the complex hyperbolic metric. Precisely, consider the unit ball $\mathbb{B} \subset \mathbb{C}^n$ endowed with the Bergman--{K\"ahler} metric:
\begin{equation}
h_{j\bar{k}}
=
-\partial_j \partial_{\bar{k}} \log (1-|z|^2)
=
(1-|z|^2)^{-1}\delta_{jk} + (1-|z|^2)^{-2} \bar{z}^j z^k.
\end{equation}
Here, $|z|^2 := \sum_{j=1}^{n} |z^j|^2$. From \cref{e:last} with $C = \tilde{C} = \alpha$, we let the weight function be
\begin{equation}
\psi(z) = \frac{\alpha }{1-|z|^2},
\quad 
\alpha >0,
\end{equation}
so that	
\begin{equation}
d\mu = e^{-\psi} \dvol_h 
=
(1-|z|^2)^{-n-1} \exp \left(-\frac{\alpha}{1-|z|^2}\right) d\lambda.
\end{equation}
It turns out that this Bergman space with the so-called ``exponential weight'' has duality properties similar to the Segal-Bargmann space, so it can be seen as a version of the Segal-Bargmann space on a bounded domain. We will show that the adjoint of the densely defined unbounded operator $\partial$ is the operator multiplication by $\alpha z.$ But in this case we have to take care of the Hermitian metric on $\mathbb B$ and of the fact that $\partial $ maps $A^2_{(p,0)}( \mathbb B, h , d\mu )$ into $A^2_{(p+1,0)}( \mathbb B, h , d\mu )$ and $\partial^*$ maps $A^2_{(p+1,0)}( \mathbb B, h , d\mu )$ into $A^2_{(p,0)}( \mathbb B, h , d\mu ).$

Since the weight is radial, the polynomials are dense in Bergman space $A^2_{(p,0)}(\mathbb{B}, h, \psi)$ (see \cite{mergelyan1953completeness} or the proof of Proposition 2.6 in \cite{zhu2005spaces} for the case $p=0$ with ``standard'' weight; the case of general radial weights and $p\geqslant 1$ follows easily). Moreover, the monomials $z^J$'s (each $J$ is a multiindex) are orthogonal in $A^2(\mathbb{B}, d\mu)$. For each $k$, put
\begin{equation}\label{coeff}
a_k = \int_0^1 (1-t)^{-n-1} \exp \left(-\frac{\alpha }{1-t}\right) t^{n+k-1} dt =
\frac{1}{\alpha^n}\int_0^\infty \frac{s^{n+k-1}}{(\alpha +s)^k} \,e^{-s - \alpha} \,ds.
\end{equation}
Then by the density of the polynomials, an orthonormal basis for $A^2(\mathbb{B}, d\mu)$ can be taken as
\begin{equation}
e_J : = \frac{(|J| + n - 1)!}{\sqrt{a_{|J|}} \pi^{n/2} J!} \, z^J,
\end{equation}	
where $J$ is a multi-index.	

Observe that
\begin{equation}
h^{j\bar{k}}
=
(1-|z|^2)(\delta_{jk} - z^j \bar{z}^k).
\end{equation}
Therefore, if $u = u_j dz^j$, then
\begin{equation}
|u|^2_h 
=
u_j u_{\bar{k}} h^{j\bar{k}}
=
(1-|z|^2)\left(\sum_{j=1}^n |u_j|^2 - \sum_{j,k} z^j\bar{z}^k u_j u_{\bar{k}}\right).
\end{equation}
Hence, the holomorphic $(1,0)$-forms with polynomial coefficients 
are in $L^2_{(1,0)}(\mathbb{B}, h, e^{-\psi})$.

We also compute,
\begin{equation}
\psi_{\bar{k}}
=
\frac{\alpha z^k}{(1-|z|^2)^2},
\end{equation}
and find that (sum over $k$)
\begin{equation}\label{e:59}
h^{j\bar{k}}\psi_{\bar{k}} = \alpha z^j
\end{equation}
are holomorphic. Consequently, for $u = u_j dz^j$
\begin{equation}
\langle u, \partial \psi\rangle_h
=
\alpha z^j u_j.
\end{equation}
Thus, if $u_j$'s are holomorphic polynomials, then $\langle u, \partial \psi\rangle_h$ is a holomorphic polynomial and hence $u\in \dom (\partial^{\ast})$. Moreover, {by \cref{prop:31}}
\begin{equation}\label{e:511}
\partial^{\ast} (u_j dz^j)
=
P_{h,\psi}(\langle u, \partial\psi\rangle_h)
=
\alpha z^j u_j.
\end{equation}
Since the restrictions of polynomials are dense in $L^2(\mathbb{B}, h,e^{-\psi})$, formula \cref{e:511} for $\partial^{\ast}$ holds for every $u \in \dom(\partial^{\ast})$.

Using Taylor series expansion (in sake of simplicity we take $n=1$) we can directly verify that for $f\in \dom(\partial)$ and $g\, dz\in \dom(\partial^{\ast})$. We have 
\begin{equation}\label{duality}
(\partial f, g\, dz )_{h,\psi} = ( f'dz, g\,dz)_{h,\psi} =( f, \alpha zg)_{h,\psi}.
\end{equation}
Each $f\in A^2(\mathbb{B}, h, d\mu)$ can be represented in the form $f = \sum_{k=0}^\infty f_k e_k,$ where 
$e_k= \frac{z^k}{c_k}$ and
$(f_k)_k \in l^2$ and each $F\in A^2_{(1,0)}(\mathbb{B}, h, d\mu)$ can be represented in the form $F= \left(\sum_{k=0}^\infty F_k E_k\right) dz,$ where $E_k = \frac{z^k}{d_k}$ and $(F_k)_k \in l^2.$
We write $f = \sum_{k=0}^\infty f_k e_k,$ where $(f_k)_k \in l^2,$ and 
$g = \sum_{k=0}^\infty g_k e_k,$ where $(g_k)_k \in l^2$ and have
\begin{equation}
c_k^2 = 2\pi \int_0^1 r^{2k+1} (1-r^2)^{-2} \exp \left ( \frac{-\alpha}{1-r^2} \right )\,dr = \frac{\pi}{\alpha} \int_0^\infty \left ( \frac{s}{s+\alpha} \right )^k \, e^{-s - \alpha}\,ds,
\end{equation}
and
\begin{equation}
d_k^2= 2\pi \int_0^1 r^{2k+1} \exp \left ( \frac{-\alpha}{1-r^2} \right )\,dr = \pi \alpha 
\int_0^\infty \frac{s^k}{(s+\alpha)^{k+2}} \, e^{- s -\alpha}\,ds.
\end{equation}
Now we obtain 
\begin{align}
\left( f'dz, gdz \right)_{h,\psi} 
& = \int_{\mathbb B} f'(z) \overline{g(z)} \exp \left ( \frac{-\alpha}{1-|z|^2} \right )\, d\lambda \notag \\
& = \sum_{k=0}^\infty f_{k+1}(k+1) \frac{1}{c_k c_{k+1}} \overline g_k d_k^2.
\end{align}
On the other hand,
\begin{equation}
( f, \alpha zg)_{h,\psi} = \alpha \sum_{k=0}^\infty f_{k+1} \frac{c_{k+1}}{c_k} \overline g_k.
\end{equation}
In order to prove \eqref{duality} we have to show that 
\begin{equation}\label{norms}
\alpha c_{k+1}^2 = (k+1) d_k^2.
\end{equation}
Observe that 
\begin{equation}
\frac{d}{ds}\left [ \left ( \frac{s}{s+\alpha} \right )^{k+1} \right ] = (k+1)\left ( \frac{s}{s+\alpha} \right )^{k} \, \frac{\alpha}{(s+\alpha)^2}.
\end{equation}
Using integration by parts we get 
\begin{align}
\alpha\, c_{k+1}^2 
& = \pi \int_0^\infty \left ( \frac{s}{s+\alpha} \right )^{k+1} \, e^{-s - \alpha}\,ds \notag \\
& = \pi \alpha (k+1) 
\int_0^\infty \frac{s^k}{(s+\alpha)^{k+2}} \, e^{-s - \alpha }\,ds = (k+1)\, d_k^2.
\end{align}
Thus, \cref{norms} and \cref{duality} follow.

We indicate that $\partial: A^2(\mathbb{B},h, d\mu) \longrightarrow A^2_{(1,0)}(\mathbb{B}, h, d\mu)$ is an unbounded operator. It suffices to consider the one-dimensional case (i.e., $n=1$). 

Take $g=\sum_{k=1}^\infty \frac{1}{k}e_k,$ then $g\in A^2(\mathbb{B}, h, d\mu),$
but $\partial (g)= g'\, dz$, where 
\begin{equation}
g' =\sum_{k=1}^\infty \frac{1}{k} \frac{k d_{k-1}}{c_k} \frac{z^{k-1}}{d_{k-1}}= 
\sum_{k=0}^\infty H_k E_k,
\end{equation} 
and $H_k=(d_{k}/c_{k+1})= \sqrt{\frac{\alpha}{k+1}},$ by \eqref{norms}, which implies that $g'\, dz\notin A^2_{(1,0)}(\mathbb{B}, h, d\mu).$

Similarly, we show that $\partial^* : A^2_{(1,0)}(\mathbb{B}, h, d\mu) \longrightarrow A^2(\mathbb{B},h, d\mu)$ is unbounded. Let $g\, dz = \sum_{k=0}^\infty \frac{1}{k+1} \frac{z^k}{d_k}.$ Then $g\, dz \in A^2_{(1,0)}(\mathbb{B}, h, d\mu).$ But 
\begin{equation}
\partial^*( g\, dz) = \alpha z g = \alpha \sum_{k=0}^\infty \frac{1}{k+1} \frac{c_{k+1}}{d_k} \frac{z^{k+1}}{c_{k+1}} = \alpha \sum_{k=1}^\infty \frac{1}{k} \frac{c_{k}}{d_{k-1}} e_k = \sum_{k=1}^\infty h_k e_k,
\end{equation}
where $h_k=\sqrt{\frac{\alpha}{k}},$ by \eqref{norms}, which implies that $ \partial^*( g\, dz)\notin A^2(\mathbb{B}, h, d\mu)$, as desired. 
\begin{remark} Since $\partial$ is unbounded, $|\partial \psi|_h$ must be unbounded by \cref{prop:31}. One can also see this by direct calculation: $|\partial \psi|^2_{h} = \alpha |z|^2(1-|z|^2)^{-2}$ is unbounded in $\mathbb{B}$.
\end{remark}

Next, we compute $\boxop_1$ for general $n.$ For this, we let $v \in A^2_{(2,0)}(\mathbb{B}, h, d\mu)$ and write 
\begin{equation}
v = \frac{1}{2}\sum_{j,k} v_{jk} dz^j \wedge dz^k = \sum_{j<k} v_{jk} dz^j \wedge dz^k,
\end{equation}
where $v_{jk} = - v_{kj}$ are holomorphic. By \cref{e:243,e:59}, 
\begin{equation}
\partial^{\ast} v
=
- \sum_{j,k,l} v_{kj} \psi_{\bar{l}} h^{j\bar{l}} dz^k
=
\alpha \sum_{j,k} v_{jk} z^{j} dz^k.
\end{equation}
For $u = u_{j} dz^j$, we have
\begin{equation}
\partial u
=
\frac{1}{2}\sum_{j,k} \left(\frac{\partial u_k}{\partial z^j} - \frac{\partial u_{j}}{\partial z^k}\right) dz^j \wedge dz^k.
\end{equation}
Thus,
\begin{equation}
\partial^{\ast} \partial u
=
\alpha \sum_{j,k} \left(\frac{\partial u_k}{\partial z^j} - \frac{\partial u_{j}}{\partial z^k}\right) z^j dz^k. 
\end{equation}
On the other hand, since
\begin{equation}
\partial^{\ast} u = \alpha \sum_{j} u_j z^j,
\end{equation}
we have
\begin{equation}
\partial \partial^{\ast} u
=
\alpha \sum_{k} \left( u_k + \sum_{j} z^j \frac{\partial u_j}{\partial z^k}\right)\, dz^k.
\end{equation}
Consequently,
\begin{equation}
\boxop_1 u
=
\alpha \left[ u + \sum_{j,k} z^j \frac{\partial u_k}{\partial z_j}dz^k \right].
\end{equation}
This is similar to the formula for $\boxop_1$ on the Segal-Bargmann space given in \cite[Eq. (2.5)]{haslinger}. Thus, we have
\begin{theorem}\label{prop:51} Let $\alpha > 0.$ Then $\boxop_1$ has a bounded inverse $\widetilde{N}_1,$ which is a compact operator on $A^2_{(1,0)}(\mathbb{B}, h , e^{-\psi})$ with spectrum $\{ \alpha k : k\in \mathbb N\},$ where each eigenvalue $\alpha k$ has multiplicity $n \binom{n+k-2}{n-1}.$ 
	In addition we have 
	\begin{equation}
	\left\| \widetilde{N}_1 u \right\| \leqslant \frac{1}{\alpha} \, \|u\|,
	\end{equation} 
	for each $u \in A^2_{(1,0)}(\mathbb{B}, h , e^{-\psi}).$
	
	Consequently, if $\eta = \eta_j dz^j \in A^2_{(1,0)}(\mathbb{B}, h, e^{-\psi})$ with $\partial \eta =0$, then $f: =\partial^{\ast} \widetilde{N}_1 \eta$ is the canonical solution of $\partial f = \eta, $ this means $\partial f = \eta $ and $f \in (\ker \partial )^\perp$. Moreover, 
	\begin{align}\label{cont61}
	\int_{\mathbb{B}} \left|f\right|^2 & (1-|z|^2)^{-n-1} \exp\left(-\frac{\alpha}{1-|z|^2}\right) d\lambda \notag \\
	&\leqslant
	\frac{1}{\alpha} \int_{\mathbb{B}} \left[\sum_{j=1}^n |\eta_j|^2 - \sum_{j,k=1}^{n} \eta_j \overline{\eta_k}z^j \bar{z}^k\right](1-|z|^2)^{-n}\exp\left(-\frac{\alpha}{1-|z|^2}\right)d\lambda.
	\end{align}
	The constant $\alpha^{-1}$ on the right-hand side is sharp. Equality occurs if and only if the $\eta_j$'s are constants.
\end{theorem}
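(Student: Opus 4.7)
The plan is to diagonalize $\boxop_1$ explicitly in an orthogonal eigenbasis of $A^2_{(1,0)}(\mathbb{B}, h, e^{-\psi})$, apply \cref{lem51} to read off the spectrum and the Neumann operator, and then use the same basis to extract the sharp estimate. Let $R = \sum_j z^j \partial_j$ be the radial (Euler) vector field. The formula for $\boxop_1$ derived just before the theorem reads
\[
\boxop_1(u_k \, dz^k) = \alpha\,(u_k + R u_k)\, dz^k,
\]
so Euler's identity shows that every form $z^J\, dz^k$ with $|J| = m$ is an eigenform with eigenvalue $\alpha(m+1)$. Writing $V_m$ for the finite-dimensional span of $\{z^J dz^k : |J| = m,\ 1\leqslant k \leqslant n\}$, we have $\boxop_1|_{V_m} = \alpha(m+1)\, I$ and $\dim V_m = n\binom{m+n-1}{n-1}$.

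The first main step is to verify that $A^2_{(1,0)}(\mathbb{B}, h, e^{-\psi}) = \overline{\bigoplus_m V_m}$ orthogonally. Orthogonality across distinct $m$ is a consequence of $U(1)$-equivariance of the inner product: under $z \mapsto e^{i\theta} z$ the measure $d\mu$ and the metric $h$ are invariant while $z^J dz^k$ is multiplied by $e^{i(|J|+1)\theta}$, so forms in $V_m$ and $V_{m'}$ lie in distinct isotypic components whenever $m \neq m'$. Density of the algebraic direct sum follows by extending to $(1,0)$-forms the Mergelyan-type density of polynomials in $A^2(\mathbb{B}, d\mu)$ already invoked in the text before \cref{coeff}. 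Picking any orthonormal basis of each $V_m$ produces a complete orthonormal system of $\boxop_1$-eigenvectors, so \cref{lem51} gives essential self-adjointness with spectrum $\{\alpha k : k \in \mathbb{N}_{\geqslant 1}\}$ and multiplicities $\dim V_{k-1} = n\binom{k+n-2}{n-1}$. Since the spectrum is bounded below by $\alpha$, the inverse $\widetilde{N}_1 = \boxop_1^{-1}$ is well-defined and bounded with $\|\widetilde{N}_1\| = 1/\alpha$, and it is compact because its eigenvalues $(\alpha k)^{-1}$ tend to zero.

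For the canonical solution, given $\partial$-closed $\eta = \sum_m \eta^{(m)}$ with $\eta^{(m)} \in V_m$, observe that $\partial$ sends $V_m$ into a subspace $W_m \subset A^2_{(2,0)}$ of $U(1)$-weight $m+1$, so the $W_m$'s are pairwise orthogonal and $\partial \eta = 0$ forces $\partial \eta^{(m)} = 0$ for every $m$. Since $\widetilde{N}_1$ acts as the scalar $(\alpha(m+1))^{-1}$ on each $V_m$, it preserves these closed subspaces, so $\partial \widetilde{N}_1 \eta = 0$. Therefore $\partial^{\ast} \partial \widetilde{N}_1 \eta = 0$, and $f := \partial^{\ast} \widetilde{N}_1 \eta$ satisfies $\partial f = \boxop_1 \widetilde{N}_1 \eta = \eta$; that $f \in (\ker \partial)^{\perp}$ is immediate from $(f, g)_{h,\psi} = (\widetilde{N}_1 \eta, \partial g)_{h,\psi} = 0$ for every $g \in A^2(\mathbb{B}, h, e^{-\psi})$ with $\partial g = 0$.

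The sharp bound and equality characterization then follow from a direct spectral computation: using $\partial \widetilde{N}_1 \eta = 0$,
\[
\|f\|^2 = (\partial \partial^{\ast} \widetilde{N}_1 \eta, \widetilde{N}_1 \eta)_{h,\psi} = (\eta, \widetilde{N}_1 \eta)_{h,\psi} = \sum_m \frac{\|\eta^{(m)}\|^2}{\alpha(m+1)} \;\leqslant\; \frac{1}{\alpha}\, \|\eta\|^2,
\]
with equality if and only if $\eta^{(m)} = 0$ for every $m \geqslant 1$, i.e., each $\eta_k$ is constant. The principal technical step, and the main obstacle, is making rigorous the orthogonal decomposition $A^2_{(1,0)} = \overline{\bigoplus_m V_m}$ (the $U(1)$-separation and the density of polynomial-coefficient $(1,0)$-forms); once this is in place, the remaining assertions are bookkeeping in the eigenbasis.
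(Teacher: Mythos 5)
Your proof is correct and takes essentially the same route as the paper: diagonalize $\boxop_1$ on the monomial-coefficient forms via the formula $\boxop_1 u = \alpha(u + Ru)$ derived just before the theorem, apply \cref{lem51} to obtain the spectrum $\{\alpha k\}$ with multiplicities $n\binom{n+k-2}{n-1}$, and then run the standard Neumann-operator computation $\|f\|^2 = (\widetilde{N}_1\eta,\eta)_{h,\psi} \leqslant \alpha^{-1}\|\eta\|^2$ with equality precisely on the lowest eigenspace. The paper compresses the solution step into citations of \cref{invers} and the Segal--Bargmann case, whereas you spell out the details, including the useful observation that $\boxop_1$ acts as the scalar $\alpha(m+1)$ on all of $V_m$, so the non-orthogonality of the individual monomial forms within $V_m$ is harmless.
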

\begin{proof}
	The proof is similar to that of \cite[Theorem 4.8]{haslinger} and uses \cref{invers}. Indeed, the coercivity of $\boxop_1$ follows directly from the fact that its spectrum consists of the the point eigenvalues $\alpha k$, $k = 1,2,\dots,$ each with finite multiplicity. Thus, $\boxop_1$ has a bounded inverse~$\widetilde{N}_1$. 
	
	Since $\eta \in \ker\partial \subset A^2_{(1,0)}(\mathbb{B}, h, \psi)$, we can define $f = \partial^{\ast} \widetilde{N}_1 \eta$. Standard arguments imply that $ f$ is orthogonal to $\ker\partial = \{\mathrm{constants}\} $ and $\partial f = \eta$. Moreover, 
	\begin{equation*} 
	\|f\|^2 
	=
	\left(\partial^{\ast} \widetilde{N}_1 \eta, f\right)_{h,\psi}
	=
	\left(\widetilde{N}_1 \eta, \partial f\right)_{h,\psi}
	=
	\left(\widetilde{N}_1 \eta, \eta\right)_{h,\psi}
	\leqslant \frac{1}{\alpha} \|\eta\|^2,
	\end{equation*} 
	and hence \cref{cont61} follows. Equality occurs if and only if $\eta$ belongs to the eigenspace corresponding to $\lambda_1 = \alpha$ which is spanned by $dz^k$, $k=1,2,\dots, n$, or equivalently, if the $\eta_k$'s are constants. The proof is complete.
\end{proof}

\begin{remark}
	If $n=1$ and $u = u \, dz \in A^2_{(1,0)}(\mathbb{B}, h , e^{-\psi})$, then
	\begin{equation}
	\boxop_1 u = \partial \partial^* u = \alpha \left(u +z \frac{\partial u}{\partial z} \right)\, dz.
	\end{equation}
	By \cref{prop:51}, we have for $\alpha > 0$ that
	\begin{equation}
	( \boxop_1 u, u)_{h,\psi} \geqslant \alpha \|u\|^2.
	\end{equation}
	For any $v\in A^2_{(1,0)}(\mathbb{B}, h , e^{-\psi})$, there exists $w\in A^2_{(0,0)}(\mathbb{B}, h , e^{-\psi})$ such that
	\begin{equation}
	\partial w = v,
	\end{equation}
	and
	\begin{equation}
	\|w\|^2 \leqslant \frac{1}{\alpha} \, \|v\|^2.
	\end{equation}
	The operator $\boxop_1$ is has an bounded inverse. We point out that \cref{invers} only gives a weaker estimate under a stronger assumption $\alpha >2$. This is also the case in higher dimension. To see this, we compute the Ricci form $\Theta = -i \partial \bar{\partial} \log \det (h_{j\bar{k}}) = -(n+1) \omega_h$, here $\omega_h = i h_{j\bar{k}} dz^j \wedge d\bar{z}^k$ is the Kähler form. Therefore, for $\epsilon > 0 $ and $\gamma: = n+1+\epsilon$, 
	\begin{align}
	i\partial\bar{\partial} \psi + \Theta - \epsilon\, \omega_h
	=
	i\left[ \frac{\alpha - \gamma (1 - |z|^2)}{(1-|z|^2)^2}
	\delta_{jk} + \frac{2\, \alpha - \gamma (1-|z|^2)}{(1-|z|^2)^3} \bar{z}^j z^k
	\right] dz^j \wedge d\bar{z}^k.
	\end{align}
	Thus, $i\partial\bar{\partial} \psi + \Theta \geqslant \epsilon\, \omega_h$ on $\mathbb{B}$ if and only if $\alpha \geqslant \gamma$, i.e., $\epsilon \leqslant \alpha - n -1$. However, we can only deduce from \cref{cor:coercive} the basic estimate \cref{e:basicestimate} with constant $c = \alpha - n -1$ which is much smaller than the lowest eigenvalue $\lambda_1 = \alpha$ of~$\boxop_1$.
\end{remark}
\subsection{Conformally K\"ahler metrics}\label{subsec:nonkahler}
In the following, we give several examples of spaces of holomorphic functions and Hermitian metrics (non-K\"ahlerian for $n\geqslant 2$) such that the $(1,0)$-vector field $(\overline \partial \psi - \overline \tau)^\sharp $ is holomorphic and, moreover, for any $(2,0)$-form $\eta \in \dom(D^{\ast})$, $D^{\ast} \eta$ is holomorphic where $\eta$ is holomorphic (The same holds for $(1,0)$-forms by the holomorphicity of $(\overline \partial \psi - \overline \tau)^\sharp $.) These properties allow us to obtain an explicit formula for the complex Laplacian~$\boxop_1$ and to study its spectrum.

To begin with, we consider conformally flat Hermitian metrics of the form
\[
	h_{j\bar{k}} = e^{\varphi} \delta_{jk} 
\]	
where $\varphi(z) = \tilde{\varphi}(|z|^2)$ and $\tilde{\varphi}(r)$ is a real-valued function of one real variable.	Clearly, the torsion tensor and torsion form is
\[
	T^i_{jk} = \tilde{\varphi}'(|z|^2)\left(\bar{z}_j \delta_k^i - \bar{z}_k \delta_j^i\right),
	\quad
	\tau_j = (n-1) \tilde{\varphi}'(|z|^2) \bar{z}_j.
\]
Assume that $\psi(z) = \tilde{\psi}(|z|^2)$ for some real-valued function of one real variable $\tilde{\psi}$, then
\[
	\left(\overline{\partial} \psi - \overline{\tau}\right)^{\sharp}
	= 
	\exp\left(-\tilde{\varphi}(|z|^2)\right) \left(\tilde{\psi}' (|z|^2) - (n-1) \tilde{\varphi}' (|z|^2)\right) z^j \frac{\partial}{\partial z_j}.
\]	
Thus, $\left(\overline{\partial} \psi - \overline{\tau}\right)^{\sharp}$ is holomorphic if and only if
\[
	\exp\left(-\tilde{\varphi}(|z|^2)\right) \left(\tilde{\psi}' (|z|^2) - (n-1) \tilde{\varphi}' (|z|^2)\right)  = C
\]
for some constant $C$, or equivalently,
\begin{equation}\label{e:last2}
	\tilde{\psi} = C \int e^{\tilde{\varphi}} dr + (n-1) \tilde{\varphi}.
\end{equation}

Using \cref{e:last2}, we can easily exhibit some examples on $\mathbb C^2:$
\begin{example} We  choose  $h_{j\overline k}=\delta_{jk} (1+|z|^2)^{m}, $ with $m \geqslant 1,$
	and 
	\[
		\psi (z) =  m \log (1+|z|^2) - \frac{\alpha (1+|z|^2)^{m+1}}{m+1}.
	\] 
	Then we get
	\begin{equation*} 
		(\overline \partial \psi -\overline \tau )^\sharp = - \alpha  \left(z_1 \frac{\partial}{\partial z_1} + z_2 \frac{\partial}{\partial z_2}\right),
	\end{equation*} 
	if $\alpha < 0,$ we get a non-trivial Bergman space $A^2 (\mathbb C^2, h, e^{-\psi}).$ 
	Next we take
	$h_{j\overline k}=\delta_{jk} \exp (|z|^2), $ 
	and
	\[
		\psi (z) =  |z|^2 -\alpha \exp (|z|^2).
	\]
	If $\alpha < 0,$ we get a non-trivial Bergman space $A^2 (\mathbb C^2, h, e^{-\psi})$ and again
	\begin{equation*} 
	(\overline \partial \psi -\overline \tau )^\sharp = - \alpha  \left(z_1 \frac{\partial}{\partial z_1} + z_2 \frac{\partial}{\partial z_2}\right).
	\end{equation*}
	These examples can be easily generalized to the case of arbitrary dimension. We omit the details.
\end{example}

We now focus on the special case leading to the solution of the $\partial$-equation on the standard weighted Bergman spaces on the ball. Precisely, on the unit ball $\mathbb{B}^n \subset \mathbb{C}^n$, we choose $h_{j\overline k}=\delta_{jk} (1-|z|^2)^{-1}$ for $z \in \mathbb{B}^n$ and $\psi (z)= \alpha \log (1-|z|^2)$, $\alpha \in \mathbb{R}$.
An easy computation shows that
\begin{equation} \label{e:tor1}
T^i_{jk}
=
(1 - |z|^2)^{-1} \left(\bar{z}_j \delta_{ik} - \bar{z}_k \delta_{ij}\right),
\quad
\tau_j = T^{i}_{ji} = (n-1)(1- |z|^2)^{-1} \bar{z}_j,
\end{equation}
and
\begin{equation}
\psi_{\bar{j}}
= -\alpha (1 - |z|^2)^{-1} z_j.
\end{equation} 
Therefore, 
\begin{equation}\label{e:539}
(\overline \partial \psi -\overline \tau )^\sharp = (1-n-\alpha ) \sum_{j=1}^{n} z^j\frac{\partial}{\partial z^j}
\end{equation} 
is also holomorphic. In this case we have, for $p \geqslant 0$,
\begin{equation*} 
L^2_{(p,0)}(\mathbb{B}^n, h, e^{-\psi})=\left\{ \sum_{|J| = p}{}^{'} u_J dz^J \colon 
\sum_{|J| = p}{}^{'}\int_{\mathbb{B}^n}|u_J(z)|^2 (1-|z|^2)^{p -n - \alpha}\,d\lambda (z) < \infty \right\}.
\end{equation*} 
For $p = 0$, the Bergman space $A^2_{(0,0)}(\mathbb{B}, h, e^{-\psi})$ coincides with the usual Bergman space $A^2_{\gamma-1}(\mathbb{B})$, with $\gamma = 1 -n -\alpha$ (see, e.g., \cite{zhu2005spaces}). As usual, we assume $\gamma > 0$, or equivalently, $\alpha < 1-n$. Then for each $p$ the weighted Bergman space $A^2_{(p,0)}(\mathbb{B}^n, h, e^{-\psi})$
is closed in the weighted Lebesgue space $L^2_{(p,0)}(\mathbb{B}^n, h, e^{-\psi})$ by Corollary 2.5 in \cite{zhu2005spaces}. Moreover, the monomials $$\left\{\dfrac{z^J}{c_J} \colon |J| \geqslant 0\right\}$$
form an orthonormal basis in $A^2_{(0,0)}(\mathbb{B}^n, h, e^{-\psi}) \cong A^2(\mathbb{B}, (1-|z|^2)^{-n-\alpha} d\lambda)$ and the $(1,0)$-forms with monomial coefficients $$\left\{\dfrac{z^J}{d_J} dz^k \colon |J| \geqslant 0,\, k = 1,2,\dots, n\right\}$$
form an orthonormal basis in $A^2_{(1,0)}(\mathbb{B}^n, h, e^{-\psi}),$ here $J=(j_1,\dots,j_n)$ is a multi-index,
\begin{equation}\label{cc}
c_J^2= \int_{\mathbb{B}} \left|z^J\right| ^2(1 - |z|^2)^{-n-\alpha} d\lambda = \frac{\omega_n \, n!\, J! \, \Gamma (1-n-\alpha)}{\Gamma \left(|J|+1-\alpha\right)},
\end{equation}
and similarly
\begin{equation}\label{dd}
d_J^2= \int_{\mathbb{B}} \left|z^J dz_k\right| ^2_h(1 - |z|^2)^{-n-\alpha} d\lambda
= \frac{\omega_n\, n!\, J! \, \Gamma (2-n-\alpha)}{\Gamma \left(|J|+2-\alpha\right)}.
\end{equation}
Here $\omega_n$ is the volume of the unit ball. Note that the metric $h$ is not complete and hence Andreotti-Vesentini density lemma does not apply. However, in view of the calculations for the hyperbolic metric on $\mathbb B,$ we get for a $(1,0)$-form
$u = \sum_{j=1}^n u_j\, dz_j \in \dom(\partial^*)$ that 
\begin{equation}\label{cdcd}
\partial^*(u) = (1-n-\alpha) \sum_{j=1}^n z^j \, u_j
\end{equation}
if we can show that 
\begin{equation}\label{duall}
(1-n-\alpha) c^2_{J+_k 1} = (j_k +1) d_J^2,
\end{equation}
here $J+_k 1$ denotes the multi-index $(j_1, \dots, j_{k-1}, j_k +1, j_{k+1}, \dots , j_n).$
Equation \eqref{duall} corresponds to \eqref{norms} and follows easily from \eqref{cc} and \eqref{dd}.

To compute $\partial^{\ast}$ for $(2,0)$-forms, we write $v = \frac12 v_{jk} dz^j \wedge dz^k$, $v_{jk} = - v_{kj}$ and use \cref{e:tor1} to obtain
\begin{equation}
\frac{1}{2} T^{\bar{i}}_{\bar{j}\bar{k}} h^{r\bar{j}} h^{s\bar{k}} h_{p\bar{i}} v_{rs} dz^p
=
z^r v_{rs} dz^s.
\end{equation}
This turns out to be holomorphic for holomorphic $(2,0)$-forms $v$. Plugging this and \cref{e:539} into \cref{e:243} (which is valid since the boundary terms in the integration-by-parts argument vanish due to the factor $1-|z|^2$), we find that
\begin{equation}
\partial^{\ast} v
=
(2 - n -\alpha) z^r v_{rs} dz^s.
\end{equation}
Here the orthogonal projection $P_{h,\psi}$ has no effect since the coefficients $v_{rs}$'s are holomorphic. We get, for $\gamma: = 1 -n -\alpha >0$,
\begin{equation}\label{boxx}
\boxop_1 u
=
\gamma\, u + \left[ (1+ \gamma)\sum_{j,k =1}^{n} z^j \frac{\partial u_k}{\partial z_j} - \sum_{j,k=1}^{n} z^j \frac{\partial u_j}{\partial z^k}\right]  dz^k,
\end{equation}
Unlike the case of complex hyperbolic metric, $\boxop_1$ is not diagonal in the basis $\{z^J dz^l \colon |J| \geq 0, l = 1,2,\dots, n\}$.

The subspaces
\[
A^2_{(1,0)}(m) := \mathrm{span}\, \left\{c_J z^J dz^l \colon, |J| = m, l = 1,2,\dots, n \right\}
\]
are invariant under the action of $\boxop_1$. Using \cref{lem51}, we can study the spectrum of $\boxop_1$ by study the spectra of its restrictions onto finite dimensional subspaces $A^2_{(1,0)}(m)$. When $m = 0$, $A^2_{(1,0)}(0)$ is spanned by $dz^1, dz^2, \dots , dz^n$ and $\boxop_1 (dz^k) = \gamma\, dz^k$ and hence $\gamma$ is an eigenvalue for $\boxop_1$. When $m = 1$, $A^2_{(1,0)}(1)$ has dimension $n^2$ and is spanned by $z^j dz^k$, $j,k = 1, \dots n$; For example, if $n=2$ then the matrix representation of $\boxop_1$ in the basis $e_1:= z_1 dz_1, e_2:=z_1 dz_2, e_3 := z_2 dz_1$, and $e_4:=z_2 dz_2$ is the following constant row/column-sum matrix
\[
	\begin{pmatrix}
		2 \gamma  & 0 & 0 & 0 \\
		0 & 2 \gamma +1 & -1 & 0 \\
		0 & -1 & 2 \gamma +1 & 0 \\
		0 & 0 & 0 & 2 \gamma  \\
	\end{pmatrix}
\]
whose eigenvalues are $2(\gamma+1)$ and $2\gamma$, the latter has  multiplicity~3, and the matrix is diagonalizable. In the general case, by straightforward calculations, we obtain that the smallest eigenvalue of $\boxop_1$ on $A^2_{(1,0)}(m)$ is $(m+1)\gamma$ while the largest one is smaller than $\gamma + m(2+\gamma)$, as simple consequences of a theorem of Ger\v{s}gorin \cite{gervsgorin1931abgrenzung}. Moreover, the corresponding matrix is diagonalizable by the self-adjointness of $\boxop_1$. Thus, by \cref{lem51}, the spectrum of $\boxop_1$ consists of point eigenvalues which are those of the finite dimensional restrictions and each has finite multiplicity. Consequently, we obtain the following result which implies \cref{thm:1.1} stated in Section~1.

\begin{theorem}\label{thm:5.3}
	If $\gamma:= 1-n -\alpha >0 $, then $\boxop_1$ is coercive and has bounded inverse $\widetilde{N}_1$, which is a compact operator on $A^2_{(1,0)}(\mathbb{B}^n,h, e^{-\psi})$ with discrete spectrum.
	
	Consequently, for every $\eta_1, \eta_2, \dots, \eta_n \in A^2_{\gamma}(\mathbb{B})$ such that $\partial \eta_j/\partial z^k = \partial \eta_k/\partial z^j$ for every pair $j,k = 1,2, \dots, n$, there exists $f \in A^2_{\gamma}(\mathbb{B})$ such that $\partial f/\partial z^k = \eta_k$ for every $k= 1,2, \dots, n$, and 
	\begin{equation}\label{e:5.48}
	\int_{\mathbb B} |f|^2 (1- |z|^2)^{\gamma-1} d\lambda \leqslant \frac{1}{\gamma} \, \int_{\mathbb{B}} \sum_{k=1}^n |\eta_k|^2 (1-|z|^2)^{\gamma} d\lambda.
	\end{equation}
	The constant $\gamma^{-1}$ on the right-hand side of \cref{e:5.48} is sharp. Equality occurs if and only if the $\eta_k$'s are constants.
\end{theorem}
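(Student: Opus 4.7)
I would organize the proof around converting the spectral information about $\boxop_1$ obtained just above the statement into the concrete estimate \cref{e:5.48}.

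\textbf{Coercivity and compactness.} First I would record that the decomposition $A^2_{(1,0)}(\mathbb{B}^n, h, e^{-\psi}) = \bigoplus_{m \geqslant 0} A^2_{(1,0)}(m)$ into $\boxop_1$-invariant, finite-dimensional pieces, together with \cref{lem51}, shows that $\boxop_1$ is essentially self-adjoint and its spectrum is the union of the (real) spectra of the finite matrices $\boxop_1|_{A^2_{(1,0)}(m)}$. From the Gershgorin bounds already stated, those spectra lie in $[(m+1)\gamma, \gamma + m(2+\gamma)]$, so the global spectrum is discrete, consists of eigenvalues of finite multiplicity, and is bounded below by $\gamma > 0$. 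This immediately gives the basic estimate \cref{e:basicestimate} with $c = \gamma$ on $\dom(\partial)\cap \dom(\partial^{\ast})$ and, since the eigenvalues tend to infinity, $\widetilde{N}_1 = \boxop_1^{-1}$ is compact.

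\textbf{From the abstract canonical solution to \cref{e:5.48}.} The hypotheses $\eta_k \in A^2_\gamma(\mathbb{B})$ and $\partial \eta_j/\partial z^k = \partial \eta_k/\partial z^j$ say precisely that $\eta := \sum_k \eta_k\, dz^k$ is a holomorphic $(1,0)$-form with $\partial \eta = 0$. A short check using $h^{j\bar{k}} = (1-|z|^2)\,\delta_{jk}$ and $e^{-\psi}\dvol_h = (1-|z|^2)^{\gamma-1}\,d\lambda$ shows
\[
\|\eta\|_{h,\psi}^2 = \int_{\mathbb{B}} \sum_{k=1}^n |\eta_k|^2\,(1-|z|^2)^\gamma\,d\lambda,
\]
and similarly $\|f\|_{h,\psi}^2 = \int_{\mathbb{B}}|f|^2(1-|z|^2)^{\gamma-1}\,d\lambda$ for $f \in A^2_{(0,0)}(\mathbb{B}^n, h, e^{-\psi}) \cong A^2_\gamma(\mathbb{B})$. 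I would then apply \cref{invers} (whose hypotheses hold by the density of monomial forms and the basic estimate just proved) to set $f := \partial^{\ast} \widetilde{N}_1 \eta$. Since $\partial^{\ast}$ acts by multiplication (cf.\ \cref{cdcd}) on holomorphic forms, the equation $\partial f = \eta$ is exactly the system $\partial f/\partial z^k = \eta_k$. The standard duality computation
\[
\|f\|_{h,\psi}^2 = \bigl(\partial^{\ast}\widetilde{N}_1\eta, f\bigr)_{h,\psi} = \bigl(\widetilde{N}_1\eta, \partial f\bigr)_{h,\psi} = \bigl(\widetilde{N}_1\eta, \eta\bigr)_{h,\psi} \leqslant \gamma^{-1}\|\eta\|_{h,\psi}^2
\]
then yields \cref{e:5.48} after substituting the integral expressions above.

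\textbf{Sharpness and equality.} The inequality $(\widetilde{N}_1\eta,\eta)_{h,\psi}\leqslant \gamma^{-1}\|\eta\|_{h,\psi}^2$ is an equality exactly when $\eta$ lies in the eigenspace of $\boxop_1$ for the least eigenvalue $\gamma$. Because this eigenvalue is attained only on the $m=0$ block $A^2_{(1,0)}(0) = \mathrm{span}\{dz^1,\dots,dz^n\}$ (on higher blocks the spectrum starts at $(m+1)\gamma \geqslant 2\gamma$), equality forces each $\eta_k$ to be constant. Conversely, a direct substitution of constant $\eta_k$'s into both sides of \cref{e:5.48}, via the formula $\boxop_1(dz^k) = \gamma\,dz^k$, produces the same numerical value.

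\textbf{Anticipated obstacle.} The only genuinely non-routine point is the justification of the spectral decomposition and coercivity in the absence of completeness of $h$, which in general would block Andreotti--Vesentini-type density arguments. I would address this by leaning on the direct calculation already given in the excerpt (through equations \cref{cdcd}--\cref{duall}) to identify $\partial^{\ast}$ explicitly on monomial forms; this makes the action of $\boxop_1$ on each finite-dimensional block $A^2_{(1,0)}(m)$ fully explicit, so that \cref{lem51} applies without invoking completeness.
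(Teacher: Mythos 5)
Your proposal is correct and follows essentially the same route as the paper: the coercivity and compactness of $\widetilde{N}_1$ are read off from the spectral analysis on the invariant finite-dimensional blocks $A^2_{(1,0)}(m)$ via \cref{lem51}, the canonical solution is $f=\partial^{\ast}\widetilde{N}_1\eta$, and the estimate comes from the duality computation $\|f\|^2=(\widetilde{N}_1\eta,\eta)\leqslant\gamma^{-1}\|\eta\|^2$ with the lowest eigenvalue $\lambda_1=\gamma$. Your extra care with the equality case (locating the $\gamma$-eigenspace in the $m=0$ block) and with the lack of completeness of $h$ (replaced by the explicit formula \cref{cdcd} on monomial forms) matches what the paper does in the surrounding discussion and in the proof of the analogous \cref{prop:51}.
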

\begin{proof} The coercivity of $\boxop_1$ and the existence and compactness of $\widetilde{N}_1$ follow directly from the fact that its spectrum consists of the the point eigenvalues with finite multiplicity. 
	
	Define $\eta = \sum_{k=1}^{n} \eta_k dz^k$, with $\eta_k$'s are holomorphic, $\partial \eta = 0$, and
	\begin{equation} 
	\| \eta \|^2 = \int_{\mathbb{B}} |\eta|_h ^2\, d\mu
	=
	\int_{\mathbb{B}} \sum_{k=1}^{n} |\eta_k|^2 (1 - |z|^2)^{\gamma} d\lambda < \infty.
	\end{equation} 
	Then $\eta \in \ker\partial \subset A^2_{(1,0)}(\mathbb{B}, h, \psi)$. Define $f = \partial^{\ast} \widetilde{N}_1 \eta$. Standard arguments imply that $ f$ is orthogonal to $\ker \partial = \{\mathrm{constants}\} $ and $\partial f = \eta$. Moreover, 
	\begin{equation} 
	\|f\|^2 
	=
	\left(\partial^{\ast} \widetilde{N}_1 \eta, f\right)_{h,\psi}
	=
	\left(\widetilde{N}_1 \eta, \partial f\right)_{h,\psi}
	=
	\left(\widetilde{N}_1 \eta, \eta\right)_{h,\psi}
	\leqslant \frac{1}{\gamma} \|\eta\|^2.
	\end{equation}
	The last inequality follows from the fact that the lowest eigenvalue of $\boxop_1$ is $\lambda_1 = \gamma$. The proof is complete.
\end{proof}
\begin{remark}
We point out again that the usual basic identity as in \cref{cor:coercive} is not useful for the metric $h_{j\bar{k}} = (1-|z|^2)^{-1}\delta_{j\bar{k}}$ as above for $n\geqslant 2$ and hence the usual strategy of the $L^2$-estimate for $\overline{\partial}$-equation fails to yield this result. To see this, we compute,
\begin{equation}
i\partial \bar{\partial} \psi + \Theta
=
i(n+\alpha) \partial\bar{\partial} \log (1-|z|^2)
\end{equation}
and
\begin{equation}
i\, T \circ \overline{T}
=
\frac{2(|z|^2 - \bar{z}^j z^k)}{(1 - |z|^2)^2} dz^j \wedge d\bar{z}^k.
\end{equation}
Consequently, for any $\mu > 1$,
\begin{multline*}
i\partial \bar{\partial} \psi + \Theta - \mu i\, T \circ \overline{T} - \epsilon\, \omega_h\\
= i\left[\frac{(n + \alpha+\epsilon - \mu)|z|^2 - n -\alpha -\epsilon}{(1-|z|^2)^2} \delta_{jk} + \frac{(2\mu -n - \alpha ) \bar{z}^j z^k}{(1- |z|^2)^2} \right] dz^j \wedge d\bar{z}^k.
\end{multline*}
For this to be nonnegative at the origin, $ n+\alpha + \epsilon < 0$. But near the boundary, the hermitian matrix in the bracket on the right-hand side is a rank-one perturbation of the negative constant multiple of the identity matrix and hence can not be nonnegative.
\end{remark}

\vskip 0.5 cm
{\bf Acknowledgement.} The authors thank the referee for several useful suggestions.

\end{document}